\date{\today}
\theoremstyle{theorem}
    \newtheorem{theorem}{Theorem}
    \newtheorem{lemma}[theorem]{Lemma}
    \newtheorem{proposition}[theorem]{Proposition}
    \newtheorem{corollary}[theorem]{Corollary}
\theoremstyle{definition} 
    \newtheorem{definition}[theorem]{Definition}
    \newtheorem{result}[theorem]{Result}
    \newtheorem{remark}[theorem]{Remark}
    \newtheorem{example}[theorem]{Example}
    \newtheorem{exercise}[theorem]{Exercise}
\def\suchthat{\; : \;}
\def\sig{\sigma}
\def\Z{\mathbb{Z}}
\def\Z{\mathbb{Z}}
\def\tends{\rightarrow}
\def\<{\langle}
\def\>{\rangle}
\newcommand{\E}{\mbox{E}}
\def\P{\mbox{P}}
\newcommand\Tr{{\mbox{Tr}}}
\newcommand\mnote[1]{} 
\newcommand\be{\begin{equation*}}
\newcommand\ee{\end{equation*}}
\newcommand\ben{\begin{equation}}
\newcommand\een{\end{equation}}
\newcommand\bes{\begin{eqnarray*}}
\newcommand\ees{\end{eqnarray*}}
\newcommand\bex{\begin{exercise}}
\newcommand\eex{\end{exercise}}
\newcommand\beg{\begin{example}}
\newcommand\eeg{\end{example}}
\newcommand\benu{\begin{enumerate}}
\newcommand\eenu{\end{enumerate}}
\newcommand\beit{\begin{itemize}}
\newcommand\eeit{\end{itemize}}
\newcommand\berk{\begin{remark}}
\newcommand\eerk{\end{remark}}
\newcommand\bdefn{\begin{defintion}}
\newcommand\edefn{\end{definition}}
\newcommand\bthm{\begin{theorem}}
\newcommand\ethm{\end{theorem}}
\newcommand\bprf{\begin{proof}}
\newcommand\eprf{\end{proof}}
\newcommand\blem{\begin{lemma}}
\newcommand\elem{\end{lemma}}
\newcommand{\Cov}{\mbox{\rm Cov}}
\newcommand{\sm}{{\raise0.3ex\hbox{$\scriptstyle \setminus$}}}
\def\sig{\sigma}
\def\tends{\rightarrow}
\def\CHI{\mathchoice%
{\raise2pt\hbox{$\chi$}}%
{\raise2pt\hbox{$\chi$}}%
{\raise1.3pt\hbox{$\scriptstyle\chi$}}%
{\raise0.8pt\hbox{$\scriptscriptstyle\chi$}}}
\def\smalloplus{\raise1pt\hbox{$\,\scriptstyle \oplus\;$}}
\title[Linear eigenvalue statistics of random circulant matrices]{Process convergence of Fluctuations of linear eigenvalue statistics of random circulant matrices }
\author{Shambhu Nath Maurya}
\address{Department of Mathematics\\
        Indian Institute of Technology Bombay\\
         Powai, Mumbai, Maharashtra 400076, India}
\email{snmaurya [at] math.iitb.ac.in}
\author{Koushik Saha}
\address{Department of Mathematics\\
        Indian Institute of Technology Bombay\\
         Powai, Mumbai, Maharashtra 400076, India}
\email{koushik.saha [at] iitb.ac.in}
\date{\today}
\thanks{This work is partially supported by UGC Doctoral Fellowship, India of Shambhu Nath Maurya.}
\begin{document}

\begin{abstract}
In this paper we discuss the process convergence of the time dependent fluctuations of linear eigenvalue  statistics of random circulant matrices with independent Brownian motion entries, as the dimension of the matrix
tends to $\infty $. Our derivation is based on the trace formula of circulant matrix,  method of moments and some combinatorial techniques.
\end{abstract}

\maketitle

\noindent{\bf Keywords :} Brownian motion, Circulant matrix, linear eigenvalue statistics, Central limit theorem, Gaussian distribution, Gaussian process, process convergence.
\section{ introduction and main results}
Suppose $M_n$ is an $n\times n$ matrix with real or complex entries. {\it linear eigenvalue  statistic} of
of $M_n$ is a
function of the form  
\begin{equation} \label{eq: 1}
\mathcal M_n(f)=\sum_{k=1}^{n}f(\lambda_k)
\end{equation}
where $\lambda_1,\lambda_2,\ldots, \lambda_n$ are the eigenvalues  of $A_n$ and $f$ is some  fixed function. The function $f$ is known as the test function.
 
The study of the linear eigenvalue statistics of random matrices is one of popular area of research in random matrix theory. The literature on linear eigenvalue statistics of random matrices is quite big. To the best of our knowledge, the fluctuation problem of linear eigenvalue statistics of random matrices was first considered by  Arharov \cite{arharov} in 1971.   
In 1982, Jonsson \cite{jonsson} established the Central limit theorem
(CLT) type results  of linear eigenvalue statistics for Wishart matrices using method of moments.
   
In last three decades the fluctuations of linear eigenvalue statistics have been  extensively studied  for various random matrix models.     
To get an overview on the fluctuation results of  Wigner and sample covariance matrices,   we refer the readers to   \cite{johansson1998}, \cite{soshnikov1998tracecentral}, \cite{bai2004clt}, \cite{lytova2009central}, \cite{shcherbina2011central}, \cite{sosoe_wong_2013} and the reference there in.  
For results on band and sparse Wigner  matrices, see  \cite{anderson2006clt},  \cite{jana2014}, \cite{li2013central} and  \cite{shcherbina2015}, and for non-Hermitian matrices, see \cite{rider_silverstein_2006}, \cite{nourdin_peccati_2010} and \cite{rourke_renfrew_2016}. 

For  fluctuations of linear eigenvalue statistcs  of other patterned random matrices, namely, Toeplitz, Hankel, band Toeplitz matrices and circulant type matrices, see  \cite{chatterjee2009fluctuations}, \cite{liu2012fluctuations}, \cite{adhikari_saha2017} and \cite{adhikari_saha2018}.

The fluctuation problem, we are interested to consider in this article, is inspired by the  results on band Toeplitz matrix by Li and Sun \cite{li_sun_2015}.
They  studied time dependent fluctuations of linear eigenvalue statistics for band Toeplitz matrices with  standard Brownian motion entries.
Here we study similar type of time dependent fluctuations of linear eigenvalue statistics for random circulant matrices. 
 
A circulant matrix is defined as 
$$
C_n=\left(\begin{array}{cccccc}
x_0 & x_1 & x_2 & \cdots & x_{n-2} & x_{n-1} \\
x_{n-1} & x_0 & x_1 & \cdots & x_{n-3} & x_{n-2}\\
\vdots & \vdots & {\vdots} & \ddots & {\vdots} & \vdots \\
x_1 & x_2 & x_3 & \cdots & x_{n-1} & x_0
\end{array}\right).
$$
Observe that for $j=1,2,\ldots, n-1$,  $(j+1)$-th row of circulant matrix is obtained by giving its $j$-th row a right circular shift by one positions and the (i,\;j)-th element of the matrix is $x_{(j-i) \mbox{ \tiny{mod} } n}$. 

Here we consider  time dependent random circulant matrices whose entries are coming from a sequence of independent standard Brownian motions (SBM) $\{b_n(t);t\geq 0\}_{n\geq 0}$. At time $t$, the entries $\{x_0,x_1,\ldots,x_{n-1}\}$ of $C_n$ will be $\{\frac{b_0(t)}{\sqrt n},\frac{b_1(t)}{\sqrt n},\ldots,\frac{b_{n-1}(t)}{\sqrt n}\}$.  We  denote the  circulant matrix at time $t$ by $C_n(t)$. Therefore 
\begin{equation}\label{def:C_n(t)}
C_n(t)=\frac{1}{\sqrt n}\left(\begin{array}{cccccc}
b_0(t) & b_1(t) & b_2(t) & \cdots & b_{n-2}(t) & b_{n-1}(t) \\
b_{n-1}(t) & b_0(t) & b_1(t) & \cdots & b_{n-3}(t) & b_{n-2}(t)\\
\vdots & \vdots & {\vdots} & \ddots & {\vdots} & \vdots \\
b_1(t) & b_2(t) & b_3(t) & \cdots & b_{n-1}(t) & b_0(t)
\end{array}\right).
\end{equation}
Now we consider linear eigenvalue statistics as defined in \eqref{eq: 1} for $C_n(t)$ with test function $f(x)=x^p$, $p\geq 2$. Therefore
$$ \sum_{k=1}^{n}f(\lambda_k(t))= \sum_{k=1}^{n}(\lambda_k(t))^p= \Tr(C_n(t))^p,$$
where $\lambda_1(t),\lambda_2(t),\ldots,\lambda_k(t)$ are the eigenvalues of $C_n(t)$. We scale and center $\Tr(C_n(t))^p$ to study its fluctuation, and define 
\begin{equation}\label{eqn:w_p(t)}
   w_p(t) := \frac{1}{\sqrt{n}} \bigl\{ \Tr(C_n(t))^p - \E[\Tr(C_n(t))^p]\bigr\}. 
   \end{equation}
   Note that $w_p(t)$ depends on $n$. But we suppress $n$  to keep the notation simple. Observe that $\{ w_p(t); t \geq 0\}$ is a continuous stochastic process.
Now we state our main results. The following theorem describes the covariance structure of $w_p(t_1)$ and $w_q(t_2)$ as $n\to\infty$.
\begin{theorem}\label{thm:circovar} 
For $0<t_1\leq t_2$ and $p,q\geq 2$, 
\begin{equation}\label{eqn:condition}
 \lim_{n\to \infty} \Cov \big( w_{p}(t_1),w_{q}(t_2) \big)
   = \left\{\begin{array}{ccc} 	 
		 	 (t_1)^pp!\ \sum_{s=0}^{p-1}	f_p(s) & \text{if}&p=q\\\\
			0 & \text{if}&p\neq q, 	 	 
		 	  \end{array}\right.	
 \end{equation}
where
\begin{equation}\label{eqn:f_p(s)}
 f_{p}(s)=\sum_{k=0}^{s}(-1)^k\binom{p}{k}(s-k)^{p-1}.
\end{equation}
\end{theorem}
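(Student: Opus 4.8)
The plan is to turn the statement into a Gaussian moment computation and then into a lattice-point count. First I would use the Fourier diagonalization of a circulant: its eigenvalues are $\lambda_k(t)=n^{-1/2}\sum_{j=0}^{n-1}b_j(t)\omega^{jk}$ with $\omega=e^{2\pi i/n}$, so that expanding $\lambda_k^p$, summing over $k$, and invoking $\sum_{k=0}^{n-1}\omega^{(d_1+\cdots+d_p)k}=n\,\mathbf 1[d_1+\cdots+d_p\equiv 0\ (\mathrm{mod}\ n)]$ gives the trace formula $\Tr(C_n(t))^p=n^{1-p/2}\sum_{\mathbf d}b_{d_1}(t)\cdots b_{d_p}(t)$, where $\mathbf d=(d_1,\dots,d_p)$ runs over $\{0,\dots,n-1\}^p$ subject to $d_1+\cdots+d_p\equiv 0$. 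Consequently $w_p(t)=n^{(1-p)/2}\sum_{\mathbf d}\bigl(b_{d_1}(t)\cdots b_{d_p}(t)-\E[b_{d_1}(t)\cdots b_{d_p}(t)]\bigr)$, and because the entries are jointly Gaussian the problem is now purely combinatorial.

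Next I would expand $\Cov(w_p(t_1),w_q(t_2))=n^{1-(p+q)/2}\sum_{\mathbf d,\mathbf e}\Cov\bigl(\textstyle\prod_i b_{d_i}(t_1),\prod_m b_{e_m}(t_2)\bigr)$ by Wick's theorem. The covariance of the two products is the sum, over all pair partitions of the $p+q$ factors that contain at least one pair joining the two time-groups, of the associated products of pairwise covariances; the subtracted means are exactly what cancel the partitions made only of within-group pairs. Each pair evaluates to $\delta_{ab}t_1$ or $\delta_{ab}t_2$ within a group and to the cross covariance $\E[b_a(t_1)b_b(t_2)]=\delta_{ab}\min(t_1,t_2)=\delta_{ab}t_1$ across groups, so every pair forces an equality of two summation indices.

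The third step is a scaling argument that isolates the surviving partitions. A partition with $w$ within-$t_1$ pairs, $w$ within-$t_2$ pairs and $p-2w$ cross pairs reduces the two congruences $\sum d_i\equiv 0$, $\sum e_m\equiv 0$ to a linear system of rank $1$ when $w=0$ and of rank $2$ when $w\geq 1$; combined with the $n^{1-(p+q)/2}$ prefactor, every partition containing a within-group pair is then $O(n^{-1})$. A partition built solely from cross pairs needs the two groups to have equal size, which at once forces the limit to be $0$ when $p\neq q$ and, when $p=q$, leaves only the $p!$ cross-bijections; each makes $\mathbf e$ a permutation of $\mathbf d$, carries the time factor $t_1^p$, and imposes the single surviving constraint $d_1+\cdots+d_p\equiv 0$.

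Finally I would evaluate the constant by counting the solutions of that constraint by wrap-level. Since $0\leq d_1+\cdots+d_p\leq p(n-1)$, the congruence splits into the exact equations $d_1+\cdots+d_p=sn$ for $s=0,1,\dots,p-1$, and inclusion–exclusion on the upper bounds $d_i\leq n-1$ shows that the number of such $\mathbf d$ is, to leading order in $n$, proportional to $\sum_{k=0}^s(-1)^k\binom pk(s-k)^{p-1}=f_p(s)$. Assembling the $p!$ bijections, the factor $t_1^p$, and these level counts reproduces $t_1^p\,p!\sum_{s=0}^{p-1}f_p(s)$. I expect the main obstacle to be precisely this final count: pinning down the exact leading constant in the lattice-point asymptotics that turns into $f_p(s)$, and proving that the within-group partitions and the coincidence-of-indices configurations are genuinely negligible uniformly, so that only the wrap-level decomposition survives in the limit.
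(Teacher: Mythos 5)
Your proposal is correct, and it reaches the theorem by a genuinely different route than the paper. You share the same starting point --- the trace formula $\Tr(C_n(t))^p = n^{1-p/2}\sum_{A_p} b_{d_1}(t)\cdots b_{d_p}(t)$, which the paper derives by letting $C_n$ act on the standard basis vectors while you diagonalize by the discrete Fourier transform --- and the same endgame, the lattice count $\lim_{n\to\infty}|A_p|/n^{p-1}=\sum_{s=0}^{p-1}f_p(s)$ (the paper imports this as Result \ref{ft:variance} from \cite{adhikari_saha2017}; you sketch its inclusion--exclusion proof). In between, the mechanics differ. The paper writes $C_n(t_2)=U+V$ with the independent increment $V=C_n(t_2)-C_n(t_1)$, expands $\Tr(U+V)^q$ binomially in $r$, and disposes of every term through a four-case analysis ($p<q$ versus $p=q$, $r$ versus $p$) based on ``each mean-zero variable must appear at least twice'' free-index counting, with only the case $p=q=r$ surviving; it must also separately argue that tuples with repeated entries are negligible via $|A_{p,s}|-|A'_{p,s}|=O(n^{p-2})$. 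You instead exploit Gaussianity exactly through the Wick/Isserlis expansion: the covariance is the sum over pairings containing at least one cross pair, and your observation that the two congruences $\sum_i d_i\equiv 0$, $\sum_m e_m\equiv 0$ have rank $1$ for all-cross pairings and rank $2$ as soon as a within-group pair is present replaces the paper's entire case analysis with a single uniform $O(n^{-1})$ bound; moreover, the $p!$ surviving cross-bijections contribute \emph{exactly} $t_1^p|A_p|$, so no repeated-index correction is needed at all. What the paper's route buys is robustness: its matching arguments use only mean zero and moment bounds rather than the Gaussian pairing identity (which is why the same template underlies the universality results it cites), and the increment decomposition is reused verbatim in the tightness argument for Theorem \ref{thm:process}. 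Two minor points in your write-up: the parametrization ``$w$ within-$t_1$ pairs, $w$ within-$t_2$ pairs, $p-2w$ cross pairs'' tacitly assumes $p=q$ (in general $w_1=(p-c)/2$ and $w_2=(q-c)/2$ with $c$ cross pairs, and $w_1+w_2\geq 1$ whenever $p\neq q$, which is all your rank argument needs); and your hedge ``proportional to'' in the final count is prudent, since the exact asymptotics of $\#\{d_1+\cdots+d_p=sn\}$ carry a factor $\frac{1}{(p-1)!}$ --- present in the paper's Result \ref{ft:variance} but, inconsistently, absent from the theorem's own display \eqref{eqn:f_p(s)}.
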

The following theorem describes joint convergence of $\{w_p(t);p\geq 2\}$ as $n \to \infty$.

\begin{theorem}\label{thm:cirmulti} 
	As $n\to\infty$, $\{w_p(t);t\geq 0, p\geq 2\}$ jointly converge to an independent family of Gaussian processes $\{N_p(t);t\geq 0,  p\geq 2\}$ in the following sense. Suppose  $\{p_1, p_2, \ldots, p_r\}\subset \mathbb N$ and $ p_i\geq  2$ for $1\leq i\leq r$;  $0<t_1<t_2 \cdots <t_r$ and $\{a_1, a_2, \ldots, a_r \} \subset \mathbb{R}$. 
	Then
	\begin{align}\label{eqn:condition}
	\lim_{n\to \infty} & \P  \Big( w_{p_1}(t_1)  \leq a_1,       w_{p_2}(t_2)  \leq a_2,\ldots, w_{p_r}(t_r)  \leq a_r \Big) \nonumber \\
	&= \P  \Big( N_{p_1}(t_1)  \leq a_1,                              N_{p_2}(t_2)  \leq a_2, \ldots, N_{p_r}(t_r)  \leq a_r \Big), 
	\end{align}
	where $\{N_p(t);t\geq 0, p\geq 2\}$ has mean zero and following covariance structure:
	\begin{equation}\label{eqn:N_p(t):cov}
	\E \Big[ N_p(t_1)N_{q}(t_2) \Big]
	= \left\{\begin{array}{ccc} 	 
	(t_1)^pp!\ \sum_{s=0}^{p-1}	f_p(s) & \text{if}&p=q\\\\
	0 & \text{if}&p\neq q, 	 	 
	\end{array}\right.	
	\end{equation}
	and $f_{p}(s)$  is as in \eqref{eqn:f_p(s)}.
\end{theorem}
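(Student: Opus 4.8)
The plan is to establish convergence of all finite-dimensional distributions by the Cram\'er--Wold device together with the method of moments, using Theorem~\ref{thm:circovar} to supply the limiting covariances. Fix indices $\{p_1,\dots,p_r\}$ with each $p_m\ge 2$, times $0<t_1<\cdots<t_r$ and scalars $a_1,\dots,a_r\in\R$, and set $W_n=\sum_{m=1}^r a_m w_{p_m}(t_m)$. It suffices to prove that $W_n$ converges in distribution to a centred Gaussian of variance $\sigma^2=\sum_{m,m'}a_m a_{m'}\,c(p_m,t_m;p_{m'},t_{m'})$, where $c(\cdot)$ denotes the limiting covariance of Theorem~\ref{thm:circovar}. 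Once the limit is shown to be jointly Gaussian with the block-diagonal covariance \eqref{eqn:N_p(t):cov}, the vanishing of the off-diagonal entries for $p\ne q$ forces the family $\{N_p\}$ to be independent across distinct $p$, which is the final assertion.

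Writing $C_n(t)=n^{-1/2}\sum_{l=0}^{n-1}b_l(t)P^l$ with $P$ the cyclic shift and using $\Tr P^m=n\,\ind\{m\equiv 0\ (\mathrm{mod}\ n)\}$, I would first record the trace formula
\begin{equation*}
\Tr (C_n(t))^p = n^{1-p/2} \sum_{\substack{0\le l_1,\dots,l_p\le n-1 \\ l_1+\cdots+l_p\equiv 0\,(\mathrm{mod}\,n)}} \prod_{i=1}^{p} b_{l_i}(t),
\end{equation*}
so that $w_p(t)=n^{(1-p)/2}\sum_{l_1+\cdots+l_p\equiv 0}\bigl(\prod_{i} b_{l_i}(t)-\E[\prod_{i} b_{l_i}(t)]\bigr)$. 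The core of the argument is then to compute $\E[W_n^k]$ for every $k\ge 1$ and to show that it converges to the $k$-th moment of $N(0,\sigma^2)$, namely $0$ for odd $k$ and $(k-1)!!\,\sigma^k$ for even $k$. Since the centred Gaussian law is determined by its moments, this yields the required convergence in distribution and hence \eqref{eqn:condition}.

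Expanding $W_n^k$ produces a sum of mixed moments $\E\bigl[\prod_{a=1}^k w_{p_{i_a}}(t_{i_a})\bigr]$, each of which is a multiple sum of expectations of products of the jointly Gaussian variables $b_l(t)$. Here I would apply Wick's formula with $\E[b_l(s)b_{l'}(s')]=\min(s,s')\,\ind\{l=l'\}$: every such expectation expands over pair partitions of the $\sum_a p_{i_a}$ Brownian legs, and each contracted pair forces an equality of two indices. The centering in the definition of $w_p(t)$ is what annihilates, by inclusion--exclusion, every partition in which some factor has only internal (within-factor) contractions, so each of the $k$ factors must be linked to the others. The decisive step is then a power-counting argument: each factor $w_{p}(t)$ carries the prefactor $n^{(1-p)/2}$ and a free sum over $p-1$ indices, one index being eliminated by the modular constraint; one checks that, after the contraction-induced identifications and the redundancy of the modular constraints they create, a partition attains the top power $n^{0}$ exactly when the $k$ factors are grouped into disjoint pairs with all contractions running \emph{between} the two members of a pair, while any partition linking three or more factors loses at least one free index and is $o(1)$.

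For odd $k$ no grouping of the $k$ factors into pairs exists, forcing $\E[W_n^k]\to 0$; for even $k=2\nu$ the surviving configurations are indexed by the pair partitions of the $k$ factors, and each pair $\{a,b\}$ contributes precisely $a_{i_a}a_{i_b}\lim_n\Cov(w_{p_{i_a}}(t_{i_a}),w_{p_{i_b}}(t_{i_b}))$, which is exactly the quantity furnished by Theorem~\ref{thm:circovar}. Summing over all pairings reproduces the Wick--Isserlis formula, i.e.\ the $k$-th moment of $N(0,\sigma^2)$, and the off-diagonal vanishing for $p\ne q$ produces the block structure \eqref{eqn:N_p(t):cov} together with the claimed independence. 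The main obstacle is the power-counting combinatorics of the previous paragraph: for an arbitrary pairing subject to both the modular constraints $\sum l_i\equiv 0$ and the index identifications, one must count exactly how many indices remain free, verify that cross-pairings between precisely two factors are the unique maximisers, and correctly track the multi-time dependence through the factors $\min(t_{i_a},t_{i_b})$ so that the surviving pair contributions coincide with the $(t_1)^p$-type covariances of Theorem~\ref{thm:circovar}.
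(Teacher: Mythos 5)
Your proposal is correct and follows essentially the same route as the paper: Cram\'er--Wold plus the method of moments, the circulant trace formula reducing everything to sums over index vectors with the modular constraint, centering forcing every factor to be cross-linked, and a power-counting argument showing that connected groups of three or more factors lose a free index and are $o(1)$, so that only pairings of factors survive and Theorem~\ref{thm:circovar} supplies the pair contributions -- this is exactly the paper's cluster decomposition and Lemma~\ref{lem:cluster}/\ref{lem:maincluster}, whose proof is the ``main obstacle'' you correctly isolate. The only cosmetic differences are that you compute moments of the single linear combination $W_n$ rather than the mixed moments $\E[w_{p_1}(t_1)\cdots w_{p_\ell}(t_\ell)]$ directly, and that you run the leg-level combinatorics through Wick's formula for the Gaussian entries, where the paper uses only mean-zero and uniform moment bounds together with the cardinality estimate $|B_{P_\ell}|=O\big(n^{(p_1+\cdots+p_\ell)/2-\ell}\big)$.
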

In \cite{li_sun_2015}, Li and Sun have studied the joint fluctuation of $(  w_{p_1}(t_1), w_{p_2}(t_2),\ldots, w_{p_r}(t_r))$, where $w_{p_i}(t_i)$ as defined in (\ref{eqn:w_p(t)}) with Circulant matrix $C_n(t)$ is replaced by band Toeplitz matrices $B_n(t)$ with band length tends to infinity as n $\tends \infty$. They have not studied the process convergence of the process  $\{ w_p(t) ; t \geq 0\}$. The following theorem describes the process convergence of $\{ w_p(t) ; t \geq 0\}$ for fixed $p \geq 2$.
\begin{theorem}\label{thm:process}
	Suppose $ p\geq 2 $. Then as $n\to\infty$
	\begin{equation}
	\{ w_p(t) ; t \geq 0\} \stackrel{\mathcal D}{\rightarrow} \{N_p(t) ; t \geq 0\}, 
	\end{equation}
	where $\{N_p(t);  t \geq 0\}$  is defined as in Theorem \ref{thm:cirmulti}.	
\end{theorem}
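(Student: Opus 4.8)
The plan is to prove weak convergence in the space $C[0,\infty)$ of continuous functions with the topology of uniform convergence on compacta, which is equivalent to weak convergence in $C[0,T]$ for every fixed $T>0$. For this it suffices to establish two things: convergence of the finite-dimensional distributions of $\{w_p(t)\}$ to those of $\{N_p(t)\}$, and tightness of the laws of $\{w_p(t);t\in[0,T]\}$ in $C[0,T]$; together with the continuity of $w_p$ these yield the claim by the standard criterion for weak convergence in $C[0,T]$. The finite-dimensional convergence requires essentially no new work: fixing the single index $p$ and choosing $0<t_1<\cdots<t_r$, Theorem \ref{thm:cirmulti} applied with $p_1=\cdots=p_r=p$ already gives $(w_p(t_1),\ldots,w_p(t_r))\convd(N_p(t_1),\ldots,N_p(t_r))$, and times equal to $0$ are harmless since $w_p(0)=0=N_p(0)$ almost surely.

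Tightness is the crux. I would use Kolmogorov's criterion: since $w_p(0)=0$, it is enough to produce constants $C=C(p,T)$ independent of $n$ and exponents $\alpha,\beta>0$ with $\E|w_p(t)-w_p(s)|^{\alpha}\le C\,|t-s|^{1+\beta}$ for all $0\le s\le t\le T$. The decisive simplification is that, for each fixed $s,t$, the increment $w_p(t)-w_p(s)$ is a mean-zero polynomial of degree at most $p$ in the Gaussian family $\{b_j(u)\}$, hence lies in the sum of the first $p$ Wiener chaoses of that family. By Nelson's hypercontractive inequality its $L^4$ and $L^2$ norms are comparable with a constant depending only on $p$ (and not on $n$), so $\E|w_p(t)-w_p(s)|^4\le 3^{2p}\bigl(\E|w_p(t)-w_p(s)|^2\bigr)^2$. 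It therefore suffices to prove the second-moment bound $\E|w_p(t)-w_p(s)|^2\le C(t-s)$ uniformly in $n$, after which $\alpha=4,\ \beta=1$ work.

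To obtain the second-moment bound I would start from the trace formula $\Tr(C_n(t))^p=n^{1-p/2}\sum_{\boldsymbol\ell}b_{\ell_1}(t)\cdots b_{\ell_p}(t)$, the sum running over $(\ell_1,\ldots,\ell_p)\in\{0,\ldots,n-1\}^p$ with $\ell_1+\cdots+\ell_p\equiv0\pmod n$, so that $w_p(t)-w_p(s)=n^{(1-p)/2}\sum_{\boldsymbol\ell}(D_{\boldsymbol\ell}-\E D_{\boldsymbol\ell})$ with $D_{\boldsymbol\ell}=\prod_k b_{\ell_k}(t)-\prod_k b_{\ell_k}(s)$. A telescoping identity writes each $D_{\boldsymbol\ell}$ as a sum of $p$ terms, each containing exactly one Brownian increment $b_{\ell_j}(t)-b_{\ell_j}(s)$. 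Expanding $\E|w_p(t)-w_p(s)|^2=n^{1-p}\sum_{\boldsymbol\ell,\boldsymbol\ell'}\Cov(D_{\boldsymbol\ell},D_{\boldsymbol\ell'})$ by the Gaussian (Wick) rule, every surviving pairing must match the increment of a $D$-term with either the other increment or a time-$t$ factor of the same index, since pairing an increment with a time-$s$ factor has zero covariance; each such match contributes a factor $(t-s)$, while the remaining covariances are bounded by $T$. Thus each $\Cov(D_{\boldsymbol\ell},D_{\boldsymbol\ell'})$ is $O(t-s)$, and the index identifications forced by the Wick deltas together with $\ell_1+\cdots+\ell_p\equiv0$ leave $O(n^{p-1})$ free indices, exactly cancelling the factor $n^{1-p}$ and giving $\E|w_p(t)-w_p(s)|^2\le C_{p,T}(t-s)$.

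The main obstacle is this last estimate: organising the Wick expansion of $\Cov(D_{\boldsymbol\ell},D_{\boldsymbol\ell'})$ so that the counting of surviving index configurations is uniform in $n$ and produces the clean power $(t-s)$ with an $n$-free constant. This is the same combinatorial bookkeeping that underlies the covariance computation of Theorem \ref{thm:circovar}; the only new ingredient is the telescoping device, which guarantees at least one increment, hence at least one factor $(t-s)$, in every nonvanishing term. Once the finite-dimensional convergence and the moment estimate are in hand, Kolmogorov's criterion gives tightness on each $[0,T]$, and combined with the identification of the finite-dimensional limits as those of the Gaussian process $N_p$ this yields $\{w_p(t)\}\convd\{N_p(t)\}$ in $C[0,\infty)$.
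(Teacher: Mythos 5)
Your proposal is correct, and it follows the paper's skeleton at the top level: both arguments combine finite-dimensional convergence (read off from Theorem \ref{thm:cirmulti} with $p_1=\cdots=p_r=p$, exactly as in the paper's Proposition \ref{pro:finite convergence}) with tightness via a fourth-moment increment bound $\E|w_p(t)-w_p(s)|^4\le M_T|t-s|^2$, i.e.\ $\alpha=4$, $\beta=1$ (the paper invokes Whitt's theorem on $C[0,\infty)$ and the Ikeda--Watanabe moment criterion; your Kolmogorov criterion is the same thing). Where you genuinely diverge is in how the fourth moment is controlled. The paper attacks it head-on: it writes $C_n(t)=C_n'(t-s)+C_n(s)$ with $C_n'(t-s)$ the circulant of increments (using stationarity of increments, $b(t)-b(s)\stackrel{d}{=}b(t-s)$), binomially expands $(C_n'(t-s)+C_n(s))^p$, splits the fourth moment into the centered part $R_1$ and the mean part $R_2$, and then runs a lengthy fourth-moment index count over the sets $D_{p_4}$ and $D^c_{p_4}$, extracting powers $(t-s)^{2d}$ term by term. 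You instead observe that $w_p(t)-w_p(s)$ lives in the first $p$ Wiener chaoses of the underlying Gaussian family, so hypercontractivity gives $\E|w_p(t)-w_p(s)|^4\le c_p\,(\E|w_p(t)-w_p(s)|^2)^2$ with $c_p$ depending only on the chaos order and hence not on $n$, reducing everything to the second-moment estimate $\E|w_p(t)-w_p(s)|^2\le C_{p,T}(t-s)$; that estimate you get by telescoping the product difference so that every nonvanishing Wick term carries at least one increment covariance $\E[(b_j(t)-b_j(s))b_j(t)]=t-s$ (pairings with time-$s$ factors vanishing), while the counting of surviving index configurations --- every index repeated at least twice among $2p$ slots plus the modular constraint, giving $O(n^{p-1})$ against the prefactor $n^{1-p}$ --- is the same bookkeeping as in Theorem \ref{thm:circovar} and is uniform in $n$. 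Your route is shorter and conceptually cleaner: it needs only pairwise (second-moment) combinatorics, it handles the paper's separate mean term $R_2$ for free since you never decenter, and the $n$-independence of the $L^4$--$L^2$ comparison is structural rather than re-proved by counting. What the paper's approach buys in exchange is self-containedness: it uses no chaos/hypercontractivity machinery, only the elementary moment-counting toolkit already deployed in Sections \ref{sec:poly} and \ref{sec:joint convergence}, which keeps the whole article at the level of the method of moments. Both proofs are valid; yours would be accepted as a streamlined alternative provided you state the hypercontractive inequality for finite sums of chaoses precisely (the constant picks up a harmless factor such as $(p+1)^2 9^p$ rather than the bare $3^{2p}$ you quote) and spell out, as you indicate, that the modular constraint removes one free index uniformly (for fixed multiplicities the last index solves $m_v v\equiv c \pmod n$, which has at most $\gcd(m_v,n)\le p$ solutions, so the count is at most $C_p n^{p-1}$).
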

\begin{remark}
In the above theorems we have considered the fluctuation of $w_p(t)$ for  $p\geq2$. For $p=0$, 
\begin{align*}
w_0(t) = \frac{1}{\sqrt{n}} \bigl\{ \Tr(I) - \E[\Tr(I)]\bigr\} = \frac{1}{\sqrt{n}}[n-n] = 0
\end{align*}
and hence it has no fluctuation.
For $p=1$,
\begin{align*}
w_1(t) = \frac{1}{\sqrt{n}} \bigl\{ \Tr(C_n(t)) - \E[\Tr(C_n(t))]\bigr\} = \frac{1}{\sqrt{n}} \big[n \frac{b_0(t)}{\sqrt n} - \E(n\frac{b_0(t)}{\sqrt n})\big] = b_0(t),
\end{align*}
as $\E(b_0(t))=0$. So $b_1(t)$ is distributed as $N(0,t)$ and its distribution does not depend on $n$. So we ignore theses two cases, for $p=0$ and $p=1$.  
\end{remark}

\begin{remark}\label{ft:w_p(t)}
For any $p \geq 2 $	and $t>0$, from Theorem \ref{thm:cirmulti} we get that
\begin{equation}\label{distributional convergence:w_p(t)}
 w_p(t) = \frac{1}{\sqrt{n}} \bigl\{ \Tr(C_n(t))^p - \E[\Tr(C_n(t))^p]\bigr\}   \stackrel{\mathcal D}{\rightarrow} N_p(t), 
\end{equation}	
where $N_p(t)$ has mean 0 and variance $\sigma_{p,t}^2 = (t)^pp!\ \sum_{s=0}^{p-1} f_p(s).$ This convergence of \eqref{distributional convergence:w_p(t)} is also follows from Theorem 1 of 
\cite{adhikari_saha2017}, where it was established in total variation norm. 
\end{remark}
 In Section \ref{sec:poly} we prove Theorem \ref{thm:circovar} and in Section \ref{sec:joint convergence} we prove Theorem \ref{thm:cirmulti}. We  use  method of moments and Cramer-Wold device to prove Theorem  \ref{thm:cirmulti}. In Section \ref{sec:process convergence} we use some results on process convergence and Theorem \ref{thm:cirmulti}, to prove Theorem \ref{thm:process} 

\section{Proof of Theorem \ref{thm:circovar}}\label{sec:poly}
We first define some notation which will be used   in the proof of Theorem \ref{thm:circovar} and Theorem \ref{thm:cirmulti}.
\begin{align}
A_p&=\{(i_1,\ldots,i_p)\in \Z^p\suchthat i_1+\cdots+i_p=0\;{(\mbox{mod $n$})},\; 0\le i_1,\ldots, i_p\le n-1\},\label{def:A_p}\\
A_{p}'&=\{(i_1,\ldots,i_p)\in \Z^p\suchthat i_1+\cdots+i_p=0\;{(\mbox{mod $n$})},\; 0\le i_1\neq i_2\neq \cdots\neq i_p\le n-1\},\nonumber\\
A_{p,s}&=\{(i_1,\ldots,i_p)\in \Z^p\suchthat i_1+\cdots+i_p=sn,\; 0\le i_1,\ldots, i_p\le n-1\},\nonumber \\
A_{p,s}'&=\{(i_1,\ldots,i_p)\in \Z^p\suchthat i_1+\cdots+i_p=sn,\; 0\le i_1\neq i_2\neq \cdots\neq i_p\le n-1\}.\nonumber 
\end{align}
The following result will be used in the proof of  Theorem \ref{thm:circovar}. For the proof the result, we refer the readers to \cite[Lemma 13]{adhikari_saha2017}.
\begin{result}\label{ft:variance}
Consider  $A_{p}$ as defined in \eqref{def:A_p}. Then 
\begin{align*}
\lim_{n\to \infty}\frac{|A_p|}{n^{p-1}}=\sum_{s=0}^{p-1}\lim_{n\to \infty}\frac{|A_{p,s}|}{n^{p-1}}=\sum_{s=0}^{p-1}f_p(s),
\end{align*}
where $f_p(s)=\frac{1}{(p-1)!}\sum_{k=0}^{s}(-1)^k\binom{p}{k}(s-k)^{p-1}.$
\end{result}
Note that to prove Theorem \ref{thm:circovar}, we have to deal with the traces of higher powers of  circulant matrices.  So here we calculate the trace of $(^p)$ for some positive integer $p$.
Let $e_1,\ldots,e_n$ be the standard unit vectors in $\mathbb R^n$, that is, $e_i=(0,\ldots,1,\ldots, 0)^t$ ($1$ in $i$-th place). Now for $i=1,\ldots, n$, we have 
\begin{align*}
(C_n)e_i=\mbox{$i$-th column}=\sum_{i_1=0}^{n-1}x_{i_1}e_{i-i_1 \mbox{ mod $n$}}.
\end{align*}
In the last equation $e_0$ stands for $e_n$. Now using the last equation, we get    
$$(C_n)^{2}e_i=\sum_{i_1,i_{2}=0}^{n-1}x_{i_1}x_{i_{2}}e_{i-i_1-i_2 \mbox{ mod $n$}},$$
and for $p>2$ 
\begin{align*}
(C_n)^{p}e_i&=\sum_{i_1,\ldots,i_{p}=0}^{n-1}x_{i_1}\cdots x_{i_{p}}e_{i-i_1-i_2-i_3\cdots -i_p \mbox{ mod $n$}}.
\end{align*}
Therefore the trace of $(C_n)^{p}$ can be written as 
\begin{align}\label{trace formula C_n}
\Tr[(C_n)^p]=\sum_{i=1}^{n}e_i^t(C_n)^p e_i=n\sum_{A_{p}}x_{i_1}\cdots x_{i_{p}},
\end{align}
where $A_p$ is as defined in \eqref{def:A_p}. For a similar result on the trace of band Toeplitz matrix see \cite{liu_wang2011}.

Now we are ready to prove Theorem \ref{thm:circovar}. We start with the following lemma.
\begin{lemma}\label{lem:w_p}
Suppose $0<t_1 \leq t_2$ and $p, q\geq 2$.
Then
\begin{align}\label{eqn:covariance}
\Cov\big(w_p(t_1),w_q(t_2)\big)&=\frac{1}{n^{\frac{p+q}{2}-1}}  \sum_ {r=0}^{q} C_{q,r} \sum_{A_p, A_q} \Big\{\E[u_{i_1}\cdots u_{i_p}{u_{j_1}} \cdots u_{j_r}v_{j_{r+1}}\cdots v_{j_q} ]\nonumber \\
&\qquad -\E[ u_{i_1}\cdots u_{i_p} ] \E[u_{j_1}\cdots u_{j_r}v_{j_{r+1}}\cdots v_{j_q}  ]        \Big\},
\end{align}
where
$u_i=b_i(t_1),\ v_i=b_i(t_2)- b_i(t_1)$ and $ \displaystyle {C_{q,r} = \binom{q}{r}}$.
\end{lemma}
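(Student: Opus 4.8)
The plan is to unwind everything to the level of the Brownian entries and to read the identity off the trace formula, the only nontrivial input being a symmetrization step. First I would substitute $x_i=b_i(t)/\sqrt n$ into the trace formula \eqref{trace formula C_n} to obtain
\[
\Tr[(C_n(t))^p]=n\sum_{A_p}\frac{b_{i_1}(t)}{\sqrt n}\cdots\frac{b_{i_p}(t)}{\sqrt n}=n^{1-p/2}\sum_{A_p}b_{i_1}(t)\cdots b_{i_p}(t),
\]
so that, recalling \eqref{eqn:w_p(t)},
\[
w_p(t)=n^{-(p-1)/2}\Big\{\sum_{A_p}b_{i_1}(t)\cdots b_{i_p}(t)-\E\big[\textstyle\sum_{A_p}b_{i_1}(t)\cdots b_{i_p}(t)\big]\Big\}.
\]
Since each $w_p(t)$ is a centered sum, $\Cov\big(w_p(t_1),w_q(t_2)\big)$ is unchanged if we drop the centering and simply take the covariance of the two raw sums. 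Collecting the scalar prefactors $n^{-(p-1)/2}n^{-(q-1)/2}=n^{1-(p+q)/2}=1/n^{(p+q)/2-1}$ already produces the normalization appearing in \eqref{eqn:covariance}.

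The remaining work is to expand the $t_2$-sum in terms of $u_i=b_i(t_1)$ and $v_i=b_i(t_2)-b_i(t_1)$. I would write $b_{j_l}(t_2)=u_{j_l}+v_{j_l}$ and expand the product over $l=1,\dots,q$, giving
\[
\prod_{l=1}^{q}b_{j_l}(t_2)=\sum_{S\subseteq\{1,\dots,q\}}\ \prod_{l\in S}u_{j_l}\prod_{l\notin S}v_{j_l}.
\]
Now I would invoke the key structural feature: the index set $A_q$ from \eqref{def:A_p} is invariant under every permutation of the coordinates $(j_1,\dots,j_q)$. Hence, for a fixed subset $S$ of size $r$, relabelling indices shows that $\sum_{A_q}\prod_{l\in S}u_{j_l}\prod_{l\notin S}v_{j_l}=\sum_{A_q}u_{j_1}\cdots u_{j_r}v_{j_{r+1}}\cdots v_{j_q}$, independently of which $r$-subset $S$ is chosen. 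Since there are $\binom{q}{r}=C_{q,r}$ subsets of size $r$, summing over $A_q$ collapses the $2^q$ terms into
\[
\sum_{A_q}\prod_{l=1}^{q}b_{j_l}(t_2)=\sum_{r=0}^{q}C_{q,r}\sum_{A_q}u_{j_1}\cdots u_{j_r}v_{j_{r+1}}\cdots v_{j_q}.
\]

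Finally I would substitute this expansion into the covariance of the raw sums and use bilinearity of $\Cov$ in its second argument to pull the finite sums over $r$ and over $A_q$ outside, writing each resulting scalar covariance as $\E[\,\cdot\,]-\E[\,\cdot\,]\E[\,\cdot\,]$; together with the normalization from the first paragraph this yields exactly \eqref{eqn:covariance}. The entire argument is algebraic, and no use of the independence of increments is needed at this stage — that will enter only later, when the individual expectations are evaluated. The one step that deserves care, and which I regard as the crux, is the symmetrization: one must confirm that $A_q$ is genuinely permutation-symmetric so that grouping by $r=|S|$ is valid, and that interchanging the finite expansion with the summation over $A_p\times A_q$ is legitimate (it is, since all sums are finite).
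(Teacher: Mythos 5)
Your proposal is correct and follows essentially the same route as the paper: both apply the trace formula \eqref{trace formula C_n}, write $b_{j}(t_2)=u_{j}+v_{j}$, expand the product over $A_q$, and collapse the $2^q$ terms into the $r$-sum with coefficient $\binom{q}{r}$. The only difference is cosmetic — the paper works with the matrices $U=C_n(t_1)$ and $V=C_n(t_2)-C_n(t_1)$ and asserts $C_{q,r}=\binom{q}{r}$ as ``easy to observe,'' whereas you make the underlying permutation-invariance of $A_q$ explicit, which is a welcome sharpening of the same argument.
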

\begin{proof}
Define
\begin{align*}
 U&=C_n(t_1)=  \frac{1}{\sqrt{n}} \Big(u_{j-i \;{(\mbox{mod $n$})}}\Big)_{i,j=1} ^ {n} \mbox{ and }\\
 V&=C_n(t_2)-C_n(t_1) =  \frac{1}{\sqrt{n}} \Big(v_{j-i \;{(\mbox{mod $n$})}}\Big)_{i,j=1} ^ {n}.
 \end{align*}
As $\E(w_p(t_1))=\E(w_q(t_2)=0$, we have 
\begin{align*}
 \Cov\big(w_p(t_1),w_q(t_2)\big)&=\E[w_p(t_1) w_q(t_2)]\\
 &= \frac{1}{n} \Big\{ \E[\Tr(C_n(t_1))^p\Tr(C_n(t_2))^q ]- \E[\Tr(C_n(t_1))^p]\E[\Tr(C_n(t_2))^q]   \Big\} \\
 &= \frac{1}{n} \Big\{ \E[\Tr(U)^p\Tr(U+V)^q ]- \E[\Tr(U)^p]\E[\Tr(U+V)^q]   \Big\}. 
 \end{align*}
 Using the trace formula (\ref{trace formula C_n}), we get
 \begin{align*}
 \E[\Tr(U^p)]&= \E\Big[n \sum_{A_{p}} \frac{u_{i_1}}{ \sqrt{n}} \cdots \frac{u_{i_p}}{ \sqrt{n}} \Big]
 = \frac{1}{ n^ {\frac{p}{2}-1}} \E\Big[ \sum_{A_{p}} u_{i_1} \cdots u_{i_p}\Big],\\
  \E[\Tr(U+V)^q]&= \E\Big[n \sum_{A_{q}} \frac{(u_{j_1} + v_{j_1})}{ \sqrt{n}} \cdots \frac{(u_{j_q} + v_{j_q})}{ \sqrt{n}} \Big] \\
  &= \frac{1}{ n^ {\frac{q}{2}-1}}\E\Big[ \sum_{A_{q}} (u_{j_1} + v_{j_1}) \cdots  (u_{j_q} + v_{j_q})\Big].
\end{align*}
 Therefore
\begin{align*}
\Cov\big(w_p(t_1),w_q(t_2)\big)&=  \frac{1}{ n^ {(\frac{p+q}{2}-1)}} \Bigg[ \E\Big\{ \Big( \sum_{A_{p}} u_{i_1} \cdots u_{i_p} \Big) \Big(   \sum_{A_{q}} (u_{j_1} + v_{j_1}) \cdots  (u_{j_q} + v_{j_q})   \Big)   \Big\} \\
 &\qquad  - \E \Big( \sum_{A_{p}} u_{i_1} \cdots u_{i_p} \Big) \Big(\E\sum_{A_{q}} (u_{j_1} + v_{j_1}) \cdots  (u_{j_q} + v_{j_q}) \Big)     \Bigg] \\
  &= \frac{1}{ n^ {(\frac{p+q}{2}-1)}} \Bigg[ \E\Big\{  \sum_{A_{p}, A_{q}} u_{i_1} \cdots u_{i_p} (u_{j_1} + v_{j_1}) \cdots  (u_{j_q} + v_{j_q})   \Big\} \\
  &\qquad  -   \sum_{A_{p} A_{q}}  \Big(\E(u_{i_1} \cdots u_{i_p})\E\big[ (u_{j_1} + v_{j_1}) \cdots  (u_{j_q} + v_{j_q})\big]\Big)     \Bigg] \\
  &= \frac{1}{ n^ {(\frac{p+q}{2}-1)}} \sum_{A_{p}, A_{q}} \Big[ \E\Big\{  u_{i_1} \cdots u_{i_p} (u_{j_1} + v_{j_1}) \cdots  (u_{j_q} + v_{j_q})   \Big\} \\
  &\qquad  -   \Big(\E(u_{i_1} \cdots u_{i_p})\E\big[ (u_{j_1} + v_{j_1}) \cdots  (u_{j_q} + v_{j_q})\big]\Big)     \Big].
\end{align*}
Hence
\begin{align*}
  \Cov\big(w_p(t_1),w_q(t_2)\big) &=\frac{1}{n^{\frac{p+q}{2}-1}}  \sum_ {r=0}^{q} C_{q,r} \sum_{A_p, A_q} \Big\{\E[u_{i_1} \cdots u_{i_p} u_{j_1} \cdots u_{j_r}v_{j_{r+1}} \cdots v_{j_q} ] \\
  &\qquad -\E[ u_{i_1} \cdots u_{i_p} ] \E[u_{j_1}\cdots u_{j_r}v_{j_{r+1}} \cdots v_{j_q}  ]        \Big\},
\end{align*}
where $C_{q,r}$ is a constant. It is easy to observe that $ \displaystyle {C_{q,r} = \binom{q}{r}}$. This completes the proof of the lemma.
\end{proof}

\begin{proof}[Proof of Theorem \ref{thm:circovar}]
 For $p,q\geq 2$ and $0 \leq r \leq q$, define
 \begin{align*}
 I_p&= (i_1, i_2, \ldots, i_p),\quad J_q=(j_1,j_2, \ldots, j_q),	\\
 J_r&= (j_1, j_2, \ldots, j_r),\quad J_{r+1,q}= (j_{r+1}, j_{r+2}, \ldots, j_q),\\
U_{I_p} &= u_{i_1}\cdots u_{i_p},\quad  U_{J_r} = u_{j_1}\cdots u_{j_r} \ \mbox{ and } \ V_{J_{r+1,q}} = v_{j_{r+1}}\cdots v_{j_q}.
\end{align*} 	
Therefore   \eqref{eqn:covariance} can be written as
  \begin{align*}
  \Cov\big(w_p(t_1),w_q(t_2)\big)= \frac{1}{n^{\frac{p+q}{2}-1}}  \sum_ {r=0}^{q} C_{q,r} \sum_{A_p, A_q} \Big\{\E[U_{I_p}U_{J_r}V_{J_{r+1,q}} ]    -\E[ U_{I_p}] \E[U_{J_r} V_{J_{r+1,q}}  ]        \Big\}, 
  \end{align*}
  where $U_{J_0}=1$ and $V_{J_{q+1,q}}=1$.
  Now for any fixed $r\in\{0,1, \ldots, q\}$,
 \begin{align} \label{equaton:T_1,T_2} 
  &\quad \frac{1}{n^{\frac{p+q}{2}-1}} \sum_{A_p, A_q} \Big\{\E[U_{I_p} U_{J_r} V_{J_{r+1,q}} ]    -\E[ U_{I_p}] \E[U_{J_r} V_{J_{r+1,q}}  ] \Big\} \nonumber \\
&=  \frac{1}{n^{\frac{p+q}{2}-1}} \sum_{A_p, A_q} \E[U_{I_p} U_{J_r} V_{J_{r+1,q}} ]    -  \frac{1}{n^{\frac{p+q}{2}-1}} \sum_{A_p} \E[ U_{I_p}] \sum_{ A_q}\E[U_{J_r} V_{J_{r+1,q}}  ] \nonumber\\
  &= T_1 - T_2, \mbox{ say}.
 \end{align}	
 Let us first calculate the second term $T_2$ of  \eqref{equaton:T_1,T_2}.
 Note that, for $ \E[U_{I_p}]$ to be non-zero, each random variable has to appear at least twice as the random variables have mean zero. Again the index variables satisfy one constraint because $(i_1,i_2, \ldots, i_p) \in A_p$. Thus we have at most $(\frac{p}{2} -1)$ free choices in the index set $A_p$.
 Hence
 \begin{equation} \label{eq:T_1 a}
 \frac{1}{n^{\frac{p}{2}-1}} \sum_{A_p} \E[ U_{I_p}] = O(1).
 \end{equation}
 Now for $r=0$, 
 $\E[U_{J_r} V_{J_{r+1,q}}]=\E[U_{J_0} V_{J_{1,q}}]=\E[v_{j_{1}}v_{j_2}\cdots v_{j_q}  ]$. By similar argument as above, we get 
 \begin{equation*}\label{eqn:r=0}
 \frac{1}{n^{\frac{q}{2}-1}}\E[U_{J_0} V_{J_{1,q}}]=O(1).
 \end{equation*}
Similarly for $r=q$, $\E[U_{J_r} V_{J_{r+1,q}}]=\E[U_{J_q} V_{J_{q+1,q}}]= \E[u_{j_1}u_{j_2}\cdots u_{j_q}]$ and hence
$$\frac{1}{n^{\frac{q}{2}-1}}\E[U_{J_q} V_{J_{q+1,q}}]=O(1).$$
Therefore for $r=0$ and $r=q$,
\begin{equation*}
T_2 = \frac{1}{n^{\frac{p+q}{2}-1}} \sum_{A_p} \E[ U_{I_p}] \sum_{A_q}\E[U_{J_r} V_{J_{r+1,q}} ] = o(1).
\end{equation*}	
For $0<r<q$, first observe that $ U_{J_r} $ and $ V_{J_{r+1,q}} $ are independent.  
Also note that for non-zero contribution from $\E[U_{J_r} V_{J_{r+1,q}}]$, no random variable can appear only once as random variables have mean zero. Therefore each indices in $  \{j_1,j_2, \ldots, j_r, j_{r+1},j_{r+2}, \ldots, j_q \} $ has to appear at least twice. Since $ U_{J_r} $ and $ V_{J_{r+1,q}} $ are independent therefore the maximum contribution comes when there is a self-matching in $\{ j_1,j_2, \ldots, j_r \} $ and also in $ \{ j_{r+1}, j_{r+2}, \ldots, j_q \}$. 
The index variables also satisfy one constraint because  $(j_1,j_2, \ldots, j_r, j_{r+1},j_{r+2}, \ldots, j_q ) \in A_q $. Therefore in such case, the maximum contribution is $  O( {n^{\frac{r}{2} + \frac{q-r}{2}-1}})  = O( {n^{\frac{q}{2}-1}}) $.
Therefore
\begin{equation} \label{eq:T_1 b}
\frac{1}{n^{\frac{q}{2}}} \sum_{A_q} \E[U_{J_r} V_{J_{r+1,q}}  ] = o(1).
\end{equation}
Therefore from (\ref{eq:T_1 a}) and (\ref{eq:T_1 b}), we have for $0<r<q$,
\begin{equation*}\label{eqn:1<r<q}  
 T_2 = \frac{1}{n^{\frac{p+q}{2}-1}} \sum_{A_p} \E[ U_{I_p}] \sum_{A_q} \E[U_{J_r} V_{J_{r+1,q}} ] = o(1). 
\end{equation*}
Hence, for $p,q\geq2$ and $0\leq r\leq q$,
\begin{equation}\label{eqn:T_2}
\lim_{n\to \infty} T_2 = \lim_{n\tends \infty} \frac{1}{n^{\frac{p+q}{2}-1}} \sum_{A_p, A_q} \E[ U_{I_p}] \E[U_{J_r} V_{J_{r+1,q}} ] = 0.
\end{equation}
 Let us now calculate the first term $T_1$ of  \eqref{equaton:T_1,T_2}. 
Without loss of generality, we assume $p \leq q$. Then there are following four cases: \\

\noindent \textbf{Case I: $p< q \mbox{ and } p\neq r$}.\\\\
We first assume $p<r$. 
Now suppose $\ell$ many indices from $I_p$ matches exactly with $\ell$ many entries from $J_r$ where $0\leq \ell \leq p$. Then a typical term in $T_1$ will be of the following form
\begin{equation*}
\E[u^2_{i_1} \cdots u^2_{i_\ell} u_{i_{\ell+1}} \cdots u_{i_p} u_{j_{\ell+1}} \cdots u_{j_r} v_{j_{r+1}} \cdots v_{j_q} ].
\end{equation*}
For non-zero contribution, each random variable has to appear at least twice and also there is a constraint,  $i_1 + i_2 + \cdots+ i_p = 0 (\mbox{mod $n$}$). Thus we have at most $( \ell + \frac{p-\ell}{2}-1)$ free choice in the index set $A_p$. Hence the maximum contribution due to $A_p$ will be $$ O(n^{\ell+ \frac{p-\ell}{2}-1}) = O(n^{\frac{p+\ell}{2}-1}).$$
Since $u_i$ and $v_i$ are independent, then we have at most $(\frac{r-\ell}{2}+ \frac{q-r}{2}-1) $ free choice in the index set $A_q $. Hence the maximum contribution due to $A_q$ will be
$$O( n^{\frac{r-\ell}{2}+ \frac{q-r}{2}-1})= O(n^{\frac{q-\ell}{2}-1}).$$
Therefore total contribution will be $ O (n^{\frac{p+q}{2}-2})$. 
Hence $T_1 \mathbb I_{\{p<r\}} = o(1).$ 
 
Now we assume $r \leq p$. If $\ell$ many indices from $I_p$ matches exactly with $\ell$ many entries from $J_r$, where $0\leq \ell \leq r$,  then a typical term in $T_1$ will be of the following form
 \begin{equation*}
 \E[u^2_{i_1} \cdots u^2_{i_\ell} u_{i_{\ell+1}} \cdots u_{i_p} u_{j_{\ell+1}} \cdots u_{j_r} v_{j_{r+1}} \cdots v_{j_q} ].
 \end{equation*}
Then by similar argument as above given for $p>r$,  the maximum contribution from $A_p$ will be $$ O(n^{\ell+ \frac{p-\ell}{2}-1}) = O(n^{\frac{p+\ell}{2}-1}),$$
 and the maximum contribution from $A_q$ will be
$$O( n^{\frac{r-\ell}{2}+ \frac{q-r}{2}-1})= O(n^{\frac{q-\ell}{2}-1}).$$
Therefore total contribution will be $ O (n^{\frac{p+q}{2}-2})$ and  hence
$T_1 \mathbb{I}_{\{r\leq p\}}= o(1).$ So 
$$T_1=T_1 \mathbb I_{\{p<r\}}+T_1 \mathbb{I}_{\{r\leq p\}}=o(1).$$

\noindent \textbf{Case II: $ p < q \mbox{ and } p = r$}.\\\\
 As $p=r$, $T_1$ will be
\begin{align*}
T_1 = \frac{1}{n^{\frac{p+q}{2}-1}} \sum_{A_p, A_q} \E[u_{i_1} \ldots u_{i_p} u_{j_1} \ldots u_{j_p} v_{j_{p+1}} \ldots v_{j_q} ],
\end{align*}
Due to $A_p$, the maximum contribution  will be obtained when $\{i_1, i_2, \ldots , i_p\}$ exactly matchs with $\{j_1, j_2, \ldots , j_p\}$ and each $(i_1, i_2, \ldots , i_p)$ consist of distinct elements. Thus maximum contribution due to $A_p$ is $O(n^{p-1})$. The maximum contribution due to $A_q$ is $ O(n^{(\frac{q-p}{2}-1)})$.
Therefore total contribution will be $O(n^{\frac{p+q}{2}-2})$. Hence
$T_1 = o(1).$ \\

\noindent \textbf{Case III: $ p = q \mbox{ and }  r<p$}. \\\\
In this case  $T_1 $ will be
\begin{equation*}
 T_1 = \frac{1}{n^{p-1}} \sum_{A_p, A_p} \E[u_{i_1} \cdots u_{i_p} u_{j_1} \cdots u_{j_r} v_{j_{r+1}} \cdots v_{j_p} ].
 \end{equation*}
Suppose $\ell$ many indices from $I_p$ matches exactly with $\ell$ many indices from $J_r$ where $0\leq \ell \leq r$. Then a typical term in $T_1$ will be of the following form
\begin{equation*}
\E[u^2_{i_1} \cdots u^2_{i_\ell} u_{i_{\ell+1}} \cdots u_{i_p} u_{j_{\ell+1}} \cdots u_{j_r} v_{j_{r+1}} \cdots v_{j_p} ].
\end{equation*} 
By similar calculation as done in Case I, the maximum contribution due to $A_p$ will be 
$ O(n^{\ell+ \frac{p-\ell}{2}-1}) = O(n^{\frac{p+\ell}{2}-1})$. 
The maximum contribution due to $A_{q(=p)}$ will be 
$ O( n^{\frac{r-\ell}{2}+ \frac{p-r}{2}-1})= O(n^{\frac{p-\ell}{2}-1})$. 
Therefore total contribution will be $O(n^{p-2})$. Hence
$T_1 = o(1).$ \\

\noindent \textbf{Case IV: $ p = q=r$}. \\\\
In this case $T_1 $ will be
\begin{equation*}
T_1 = \frac{1}{n^{p-1}} \sum_{A_p, A_{p}} \E[u_{i_1} \cdots u_{i_p} u_{j_1} \cdots u_{j_p} ].
\end{equation*}
Maximum contribution will come when $\{i_1, i_2, \dots, i_p\}$ completely matches with $\{j_1, j_2, \ldots, j_p\}$ and each entries of $(i_1,i_2, \ldots, i_p)$ are distinct and contribution will be $O(n^{p-1}).$ Therefore we have 

\begin{equation*}
T_1 = \frac{1}{n^{p-1}} p! \sum_{A_p} \E[u^2_{i_1} \cdots u^2_{i_p} ].
\end{equation*}
The factor $p!$ is coming because $\{i_1, i_2, \ldots, i_p\}$ can match with the given vector $(j_1,j_2, \ldots, j_p)$ in $p!$ ways. Hence
\begin{equation*}
T_1 = \frac{1}{n^{p-1}} p! \sum_{A_p} \E[u^2_{i_1} \cdots u^2_{i_p}] = \frac{p!}{n^{p-1}} \sum_{s=0}^{p-1} \sum_{A_{p,s}} \E[u^2_{i_1} \cdots u^2_{i_p}],
\end{equation*}
where $A_{p,s}$ is as defined in  (\ref{def:A_p}). Now
\begin{align*}
 \lim_{n\to \infty} T_1 &= \lim_{n\to \infty} \frac{p!}{n^{p-1}} \sum_{s=0}^{p-1} \sum_{A_{p,s}} \E[u^2_{i_1} \cdots u^2_{i_p}]\\
 &= \lim_{n\to \infty} \frac{p!}{n^{p-1}} \sum_{s=0}^{p-1} \sum_{A'_{p,s}} \E[u^2_{i_1} \cdots u^2_{i_p}] \\
 &= (t_1)^p p! \sum_{s=0}^{p-1} \lim_{n\to \infty} \frac{|A'_{p,s}|}{n^{p-1}} \\
 &= (t_1)^p p! \sum_{s=0}^{p-1} \lim_{n\to \infty} \frac{|A_{p,s}|}{n^{p-1}},
\end{align*}
where $A_{p,s}$ and $A'_{p,s}$ are defined in  (\ref{def:A_p}). Also note that the last equality is coming because if any two indices of $\{i_1, i_2, \ldots, i_p \}$ are equal then $|A_{p,s}| - |A'_{p,s}| = O(n^{p-2})$, which gives zero contribution in limit. Therefore from Result \ref{ft:variance}, 
 $$ \lim_{n\to \infty}T_1 = (t_1)^p p! \sum_{s=0}^{p-1}f_p(s).$$ 
Hence combining all four cases we have that if $p=q=r,$ then 
\begin{equation}\label{eqn:T_1}
\lim_{n\to \infty} T_1 = \lim_{n\to \infty} \frac{1}{n^{\frac{p+q}{2}-1}} \sum_{A_p, A_q} \E[U_{I_p} U_{J_r} V_{J_{r+1,q}} ] = (t_1)^p p! \sum_{s=0}^{p-1}f_p(s). \\
\end{equation}
Therefore from  (\ref{eqn:T_1}) and  (\ref{eqn:T_2}), for $0\leq r \leq q$ we have
\begin{align*}
&\quad \lim_{n\to \infty} \frac{1}{n^{\frac{p+q}{2}-1}} \sum_{A_p A_q}  \Big\{\E[U_{I_p}U_{J_r}V_{J_{r+1,q}} ]    -\E[ U_{I_p}]             \E[U_{J_r} V_{J_{r+1,q}}  ] \Big\} \\
&= \left\{\begin{array}{ccc} 	 
 (t_1)^pp!\ \sum_{s=0}^{p-1}	f_p(s) & \text{if} &p=q=r,\\
 0 & \text{otherwise}.&  	 	 
 \end{array}\right.
\end{align*} 
Now for $q=r,$ we have $ \ C_{q,r} = 1$, and hence
\begin{equation*}
\lim_{n\to \infty} \Cov\big(w_p(t_1),w_q(t_2)\big)
= \left\{\begin{array}{ccc} 	 
(t_1)^pp!\ \sum_{s=0}^{p-1}	f_p(s) & \text{if} & p=q,	\\\\
0 & \text{if} & p\neq q.  	 
\end{array}\right.
\end{equation*}

This completes the proof of Theorem \ref{thm:circovar}.
\end{proof}
The following result is a consequence of  Theorem \ref{thm:circovar}. It will be used in the proof of Theorem \ref{thm:cirmulti}.
\begin{corollary}\label{cor:existence of Gaussian process} 
For $p\geq 2$, there exists a centred Gaussian process $\{N_p(t);t\geq 0\}$  such that
$$\Cov(N_p(t_1),N_p(t_2))= (\min\{t_1,t_2\})^pp!\ \sum_{s=0}^{p-1}	f_p(s),$$
where $f_p(s)$ as in \eqref{eqn:f_p(s)}. 
\end{corollary}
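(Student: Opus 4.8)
The plan is to invoke Kolmogorov's existence theorem: a centred Gaussian process with prescribed finite-dimensional distributions exists as soon as the proposed covariance kernel is symmetric and positive semi-definite. So I would set
\[
c_p := p!\sum_{s=0}^{p-1} f_p(s), \qquad K(t_1,t_2) := c_p\,(\min\{t_1,t_2\})^p,
\]
and show that $K$ is a genuine covariance kernel. The process $\{N_p(t);t\ge 0\}$ is then the centred Gaussian process determined by $K$, and by construction its covariance matches the expression in Theorem \ref{thm:circovar}.

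Symmetry of $K$ is immediate. For the constant I would record that $c_p \geq 0$: by Result \ref{ft:variance}, $\sum_{s=0}^{p-1} f_p(s) = \lim_{n\to\infty} |A_p|/n^{p-1}\ge 0$ as a limit of nonnegative quantities. (Equivalently, taking $t_1=t_2=t$ and $p=q$ in Theorem \ref{thm:circovar} identifies $t^p c_p$ as the limiting variance of $w_p(t)$, which is necessarily nonnegative.)

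The only real content is positive semi-definiteness of the kernel $(t_1,t_2)\mapsto(\min\{t_1,t_2\})^p$. The kernel $(s,t)\mapsto\min\{s,t\}$ is the covariance of standard Brownian motion and hence positive semi-definite. By the Schur product theorem the Hadamard (entrywise) product of positive semi-definite matrices is again positive semi-definite; applying this $p-1$ times shows that for any $t_1,\dots,t_m\geq 0$ the matrix $\big((\min\{t_i,t_j\})^p\big)_{i,j}$ is positive semi-definite. Multiplying by $c_p\geq 0$ preserves this, so $K$ is a valid covariance kernel and the corollary follows.

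If one prefers an explicit verification (and even a concrete construction), I would instead use the representation
\[
(\min\{s,t\})^p = \int_{[0,\infty)^p} \mathbf{1}_{\{\max_k u_k \le s\}}\,\mathbf{1}_{\{\max_k u_k \le t\}}\, d u_1\cdots d u_p,
\]
which exhibits $K$ in the form $c_p\int \phi_s\phi_t\,d\mu$ with $\phi_t(\mathbf{u})=\mathbf{1}_{\{\max_k u_k\le t\}}$ and $\mu$ Lebesgue measure on $[0,\infty)^p$. Positive semi-definiteness is then transparent, since $\sum_{i,j}a_ia_j\int\phi_{t_i}\phi_{t_j}\,d\mu = \int\big(\sum_i a_i\phi_{t_i}\big)^2\,d\mu\ge 0$, and one may realize $N_p$ directly as the Wiener integral $\sqrt{c_p}\int \phi_t\,dW$ against a white noise on $[0,\infty)^p$. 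Either way the argument is routine; the only (very mild) obstacle is recognizing that the power kernel $(\min)^p$ must be shown positive semi-definite, which the Schur product theorem—or the integral representation—delivers at once.
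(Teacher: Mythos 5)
Your proof is correct, but it takes a genuinely different route from the paper's. The paper never verifies positive semi-definiteness of the explicit kernel: it sets $K_n(t_1,t_2)=\Cov\big(w_p(t_1),w_p(t_2)\big)$, notes that each $K_n$ is automatically positive semi-definite because it is the covariance kernel of an actual process, uses Theorem \ref{thm:circovar} to obtain the pointwise limit $K(t_1,t_2)=(\min\{t_1,t_2\})^p\,p!\sum_{s=0}^{p-1}f_p(s)$, observes that symmetry and positive semi-definiteness survive pointwise limits, and then applies the Kolmogorov consistency theorem. You instead prove directly that $c_p(\min\{t_1,t_2\})^p$ is a covariance kernel, either by applying the Schur product theorem $p-1$ times to the Brownian kernel $\min\{s,t\}$, or via the representation $(\min\{s,t\})^p=\int_{[0,\infty)^p}\mathbf 1_{\{\max_k u_k\le s\}}\,\mathbf 1_{\{\max_k u_k\le t\}}\,du_1\cdots du_p$, which in addition furnishes a concrete realization of $N_p$ as a Wiener integral against white noise on $[0,\infty)^p$. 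Both arguments are sound, and both need $c_p\ge 0$, which you correctly secure. What each buys: the paper's argument is soft and requires no structural knowledge of the limiting kernel (it would work verbatim for any limit of covariances, so nothing about the $\min$ structure has to be checked), whereas yours is self-contained --- it does not rely on $K$ arising as a limit of prefinite covariances --- and gives an explicit construction of the limit process rather than bare existence; note, though, that your fallback justification of $c_p\ge 0$ via the limiting variance of $w_p(t)$ quietly re-imports the paper's mechanism, while your first justification via Result \ref{ft:variance} is independent of it (and is unaffected by the paper's own inconsistent normalization of $f_p(s)$, which carries a factor $\frac{1}{(p-1)!}$ in Result \ref{ft:variance} but not in \eqref{eqn:f_p(s)}, since the factor is positive).
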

\begin{proof}
Define $K_n:I\times I\to \mathbb R$ as
$K_n(t_1,t_2)=\Cov\big(w_p(t_1),w_p(t_2)\big)$, where $I=[0,\infty)$.  $K_n$ is a covariance kernal and  therefore $K_n$ is positive definite in the sense that $\det(K_n(t_i,t_j))\geq 0$ for any $t_1,t_2,\ldots,t_n\in I$. Now define
$K(t_1,t_2):=\lim_{n\to \infty}K_n(t_1,t_2)$ for $(t_1,t_2)\in I\times I$. Then $K$ is symmetric and positive definite, as $K_n$ is so. Hence $K$ is a covariance kernel. As the projection of $n$ dimensional Gaussian distribution with mean zero and covariance matrix $(K(t_i,t_j))_{1\leq i,j\leq n}$ to the first $(n-1)$ co-ordinates is the $(n-1)$ dimensional Gaussian distribution with mean zero and covariance matrix $(K(t_i,t_j))_{1\leq i,j\leq n-1}$, by Kolmogorov consistency theorem  there exists a centred Gaussian process $\{N_p(t);t\geq 0\}$ with covariance kernel $K$. This completes the proof.
\end{proof}

\section{Proof of Theorem \ref{thm:cirmulti}}\label{sec:joint convergence} 
We use method of moments to prove Theorem \ref{thm:cirmulti}. 
Here  we begin with  some notation and definitions. First recall $A_p$ from \eqref{def:A_p} in Section \ref{sec:poly},
$$A_p=\{(i_1,\ldots,i_p)\in \Z^p\suchthat i_1+\cdots+i_p=0\;{(\mbox{mod $n$})},\; 0\le i_1,\ldots, i_p\le n-1\}.$$ 
For a  vector  $ J = (j_1, j_2, \ldots, j_p)\in A_p, \ p\geq 2$, we define a multi set $S_J$ as 
\begin{equation}\label{def:S_j}
S_{J} = \{j_1, j_2, \ldots,j_p\}.
\end{equation}
\begin{definition}\label{def:connected}
	Two vectors $J =(j_1, j_2, \ldots, j_p)$ and $J' = (j'_1, j'_2, \ldots, j'_q)$, where $J \in A_p$ and $J' \in A_q$, are said to be \textit{connected} if
	$S_{J}\cap S_{J'} \neq \emptyset$.	
\end{definition}
For $1 \leq i \leq \ell$, suppose $J_i \in A_{p_i}$. Now, we define cross-matched and self-matched element in $\displaystyle{\cup_{i=1}^{\ell} S_{J_i} }$.
\begin{definition}\label{def:cross matched}
	An element in $\displaystyle{\cup_{i=1}^{\ell} S_{J_i} }$ is called \textit{cross-matched} if it appears at least in two distinct $S_{J_i}$. If it appears in $k$ many ${S_{J_i}}^,s$, then we say its \textit{cross-multiplicity} is $k$.
\end{definition}
\begin{definition}\label{def:self-matched}
	An element in $\displaystyle{\cup_{i=1}^{\ell} S_{J_i} }$ is called \textit{self-matched} if it appears more than one in one of $S_{J_i}$. If it appears $k$ many times in $S_{J_i}$, then we say its \textit{self-multiplicity} in $S_{J_i}$ is $k$.
	
	An element in $\displaystyle{\cup_{i=1}^{\ell} S_{J_i} }$  can be both self-matched and cross-matched.
\begin{definition}\label{def:cluster}
	Given a set of vectors $S= \{J_1, J_2, \ldots, J_\ell \}$, where $J_i \in A_{p_i}$ for $1 \leq i \leq \ell$, a subset $T=\{J_{n_1}, J_{n_2}, \ldots, J_{n_k}\}$ of $S$ is called a \textit{cluster} if it satisfies the following two conditions: 
	\begin{enumerate}
	\item[(i)] For any pair $J_{n_i}, J_{n_j}$ from  $T$ one can find a chain of vectors from 
	$T$, which starts with $J_{n_i}$ and ends with $J_{n_j}$ such that any two neighbouring vectors in the chain are connected.
	\item[(ii)] The subset $\{J_{n_1}, J_{n_2}, \ldots, J_{n_k}\}$ can not  be enlarged to a subset which preserves condition (i).
	\end{enumerate}
\end{definition}
\begin{definition} The number of vectors in a cluster is called the \textit{length} of the cluster.
\end{definition}
Let us understand the notion of a cluster using a graph. We denote the vectors from ${A_{p_i}}^,s$ by vertices  and the connection between two vectors from ${A_{p_i}}^,s$ by an edge. Then the clusters are nothing but the connected components in that graph. For example, suppose $\{J_1,J_2,J_3\}$ form a cluster in $\{J_1,J_2,J_3,J_4,J_5\}$. Then in the graph, $\{J_1,J_2,J_3\}$ form a connected component (see figure \ref{fig:cluster1}). If $\{J_4,J_5\}$  form a different cluster then the graph will have two connected components (see Figure \ref{fig:cluster2}). 

\begin{figure}[h]
\centering
\includegraphics[height=40mm, width =90mm ]{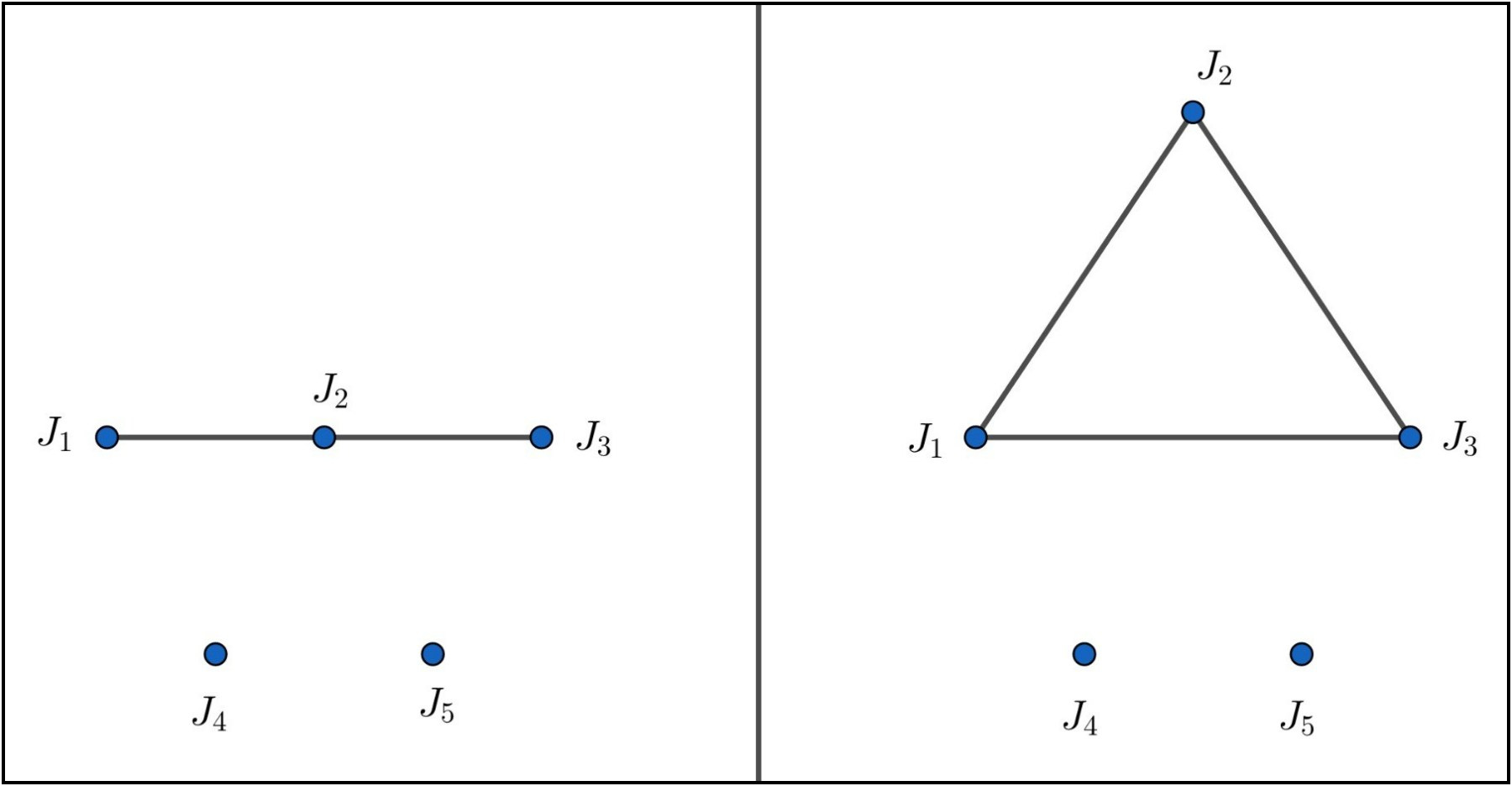}
\caption{$\{J_1,J_2,J_3\}$ form a cluster in $\{J_1,J_2,J_3,J_4,J_5\}$, and $J_4$ and $J_5$ are not connected to anyone.}\label{fig:cluster1}
\end{figure}


\begin{figure}[h]
\centering
\includegraphics[height=40mm, width =90mm ]{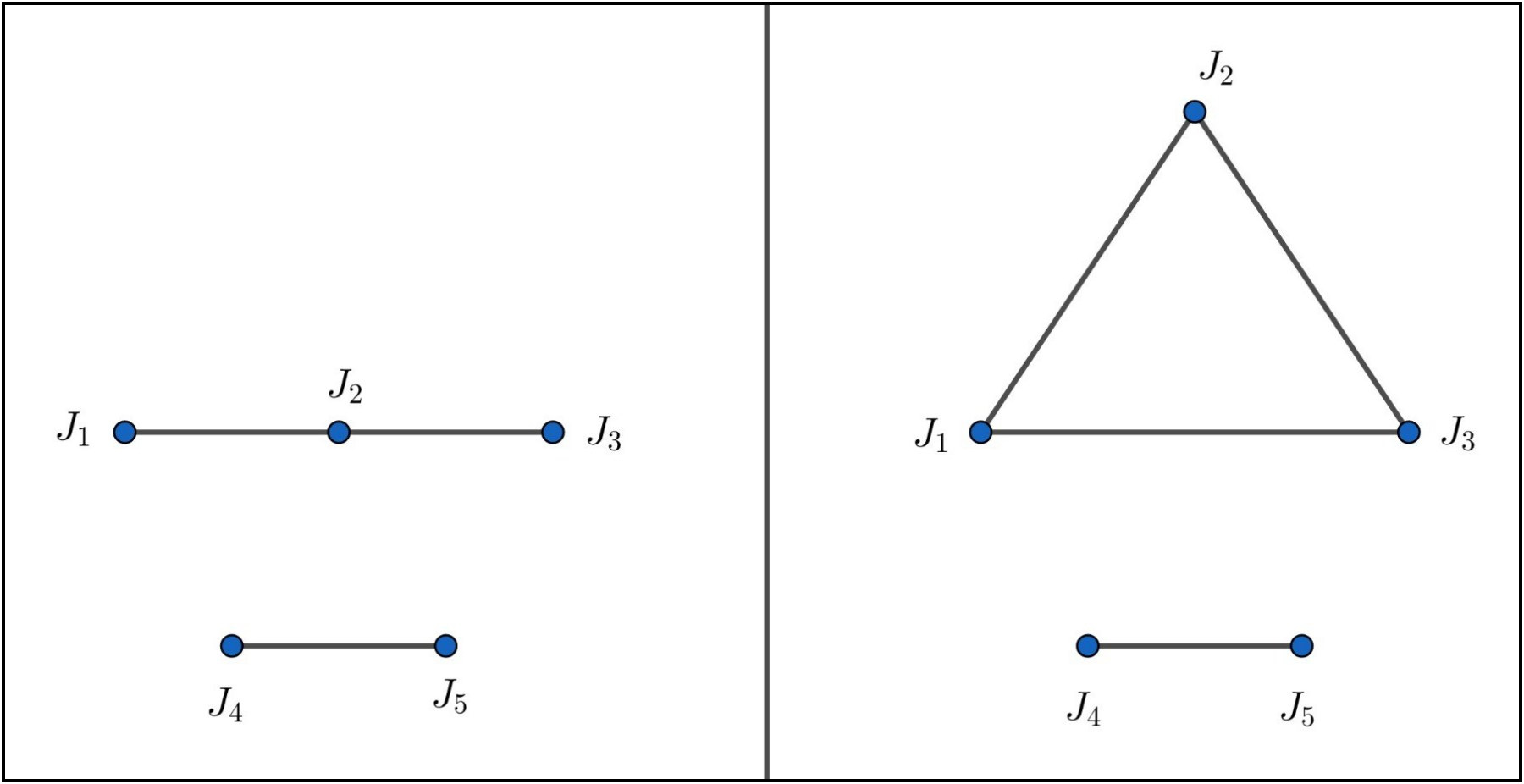}
\caption{In both the figures there are two connected components, one consists of $\{J_1,J_2,J_3\}$ and another consists of $\{J_4,J_5\}$.}\label{fig:cluster2}
\end{figure}



Now we define a subset $B_{P_\ell}$ of the Cartesian product  $ A_{p_1} \times A_{p_2} \times \cdots \times A_{p_\ell}$ where $p_i\geq 2$ and $A_{p_i}$ is as defined in \eqref{def:A_p}.  
\begin{definition}\label{def:B_{P_l}}
 Let $\ell \geq 2$ and  $P_\ell = (p_1,p_2, \ldots, p_\ell ) $ where $ p_i \geq2$. Now $ B_{P_\ell}$ is a subset of $ A_{p_1} \times A_{p_2} \times \cdots \times A_{p_\ell}$ such that  $ (J_1, J_2, \ldots, J_\ell) \in B_{P_\ell} $ if 
 \begin{enumerate} 
 	\item[(i)] $J_1, J_2, \ldots, J_\ell $ form a cluster,
 	\item[(ii)] each element in  $\displaystyle{\cup_{i=1}^{\ell} S_{J_i} }$ has  multiplicity greater equal to two. 
 \end{enumerate}
\end{definition}

\end{definition}
The set $B_{P_\ell}$ will play an important role in the proof of Theorem \ref{thm:cirmulti}. The next lemma gives us the cardinality of $B_{P_\ell}$.
\begin{lemma}\label{lem:cluster}
For  $\ell \geq 3 $, 
\begin{equation}\label{equation:cluster}
|B_{P_\ell }| = O \big(n^{\frac{p_1+p_2 + \cdots + p_\ell}{2}-\ell}\big).
\end{equation}
\end{lemma}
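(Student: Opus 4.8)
The plan is to organise $B_{P_\ell}$ according to the pattern of coincidences among the $P:=p_1+\cdots +p_\ell$ entries appearing in $(J_1,\dots ,J_\ell)$, and to bound, for each such pattern, the number of ways of assigning actual values in $\{0,1,\dots ,n-1\}$ that are consistent both with the pattern and with the $\ell$ modular constraints $\sum_{\text{entries of }J_i}\equiv 0\ (\mathrm{mod}\ n)$, one for each factor $A_{p_i}$. The number of patterns depends only on $P$ and not on $n$, so it is enough to prove that every pattern contributes $O(n^{P/2-\ell})$ assignments. Condition (ii) in Definition \ref{def:B_{P_l}} forces each value to occur at least twice, hence a tuple of $B_{P_\ell}$ involves at most $P/2$ distinct values; this already gives the crude bound $O(n^{P/2})$, and the whole point is to recover the extra factor $n^{-\ell}$ from the $\ell$ sum-constraints.

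To exploit condition (i), I would pass to the connection graph $G$ on the vertices $J_1,\dots ,J_\ell$, with an edge whenever two vectors are connected in the sense of Definition \ref{def:connected}. Since the $J_i$ form a single cluster, $G$ is connected and carries a spanning tree; after relabelling I may assume that for each $k\ge 2$ the vector $J_k$ shares at least one value with $J_1\cup\cdots\cup J_{k-1}$. I then count the distinct values by introducing the vectors one at a time along this order and, simultaneously, discharge the modular constraints one per vector. The bookkeeping is: the total number of distinct values is at most $P/2$, and each of the $\ell$ constraints, when it is effective, removes one further degree of freedom, so the number of free indices is at most $P/2-\ell$, which is the desired exponent.

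The main obstacle is to verify that all $\ell$ constraints are effective, i.e. that together with the identifications forced by the matching pattern they cut the dimension down by the full $\ell$ rather than by fewer. The delicate case is a vector $J_k$ whose entries are already entirely matched to values occurring in other vectors: its relation $\sum\equiv 0\ (\mathrm{mod}\ n)$ may then be satisfied automatically and contribute no reduction. The crucial step I would isolate and prove is that such a redundancy cannot occur for free: whenever the constraint attached to some $J_k$ becomes dependent on the others, the entries are forced to obey an additional coincidence, which lowers the count of distinct values below $P/2$ by exactly the amount lost from the constraints. Quantifying this trade-off---tracking, edge by edge along the spanning tree, whether a cross-match consumes a constraint or instead strictly decreases the number of free values---is the technical heart of the lemma, and it is here that the connectivity in (i) is indispensable.

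An alternative organisation is induction on $\ell$: delete a leaf $J_\ell$ of the spanning tree, apply the estimate to the connected cluster $\{J_1,\dots ,J_{\ell-1}\}$, and control the contribution of the removed vector. This requires a slightly strengthened inductive statement, because deleting $J_\ell$ may destroy the multiplicity-at-least-two property for a value that $J_\ell$ shared with the rest; I would accommodate this by carrying along the number of such boundary values, with the base case handled directly for small $\ell$. Either way, once the per-pattern estimate $O(n^{P/2-\ell})$ is in hand, summing over the finitely many patterns yields $|B_{P_\ell}|=O(n^{(p_1+\cdots +p_\ell)/2-\ell})$.
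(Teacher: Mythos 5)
Your outline follows the same route as the paper's proof: decompose $B_{P_\ell}$ over the finitely many matching patterns, order the vectors along a spanning tree of the connection graph, count each cross-matched entry once and each self-matched pair of entries once (using condition (ii)), and charge one degree of freedom to each of the $\ell$ modular constraints to reach the exponent $\frac{P}{2}-\ell$, $P:=p_1+\cdots+p_\ell$. The paper implements this with explicit bookkeeping: in the $\ell=3$ case it observes that an element of cross-multiplicity at least three strictly lowers the exponent (the factor $n^{-r/2}$ in its Type II estimate), and for general $\ell$ it restricts to cross-multiplicity at most two, introduces the coincidence counts $r_{s,i}$ subject to the strict inequalities $r_{1,s}+\cdots+r_{s-1,s}+r_{s,s+1}+\cdots+r_{s,\ell}<p_s$, and multiplies per-vector contributions, each carrying a factor $n^{-1}$ for its own constraint.

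However, what you submit is a plan rather than a proof: the step you yourself identify as the technical heart --- that whenever the constraint attached to some $J_k$ becomes dependent on the others, the number of distinct values must drop below $\frac{P}{2}$ by exactly the number of constraints lost --- is announced but never argued, and as stated it is false. Take $\ell=3$, $p_1=p_2=p_3=2$, and $J_1=J_2=J_3=(i,\,n-i)$ for $i=1,\ldots,n-1$. Each element of $S_{J_1}\cup S_{J_2}\cup S_{J_3}$ has multiplicity three and the triple is a cluster, so these tuples lie in $B_{P_3}$; there are of order $n$ of them, two of the three constraints are redundant, yet the number of distinct values is $2=\frac{P}{2}-1$, not $\frac{P}{2}-2$, so the proposed compensation fails (and such patterns in fact exceed the exponent $\frac{P}{2}-\ell$, the same mechanism behind the paper's remark that the bound fails for $\ell=2$, $p_1=p_2$). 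So executed literally your argument stalls exactly where you flagged: you would need either to prove a correct quantitative trade-off --- which cannot recover one distinct value per lost constraint in general --- or to first dispose of high cross-multiplicity patterns separately, as the paper does via its Type II computation before assuming the strict inequalities above. Your alternative induction on $\ell$ has the same status: the ``slightly strengthened inductive statement'' tracking boundary values is never formulated, and it is precisely where the same difficulty reappears. To be fair, the paper's own accounting also silently charges $n^{-1}$ to every vector's constraint (this is what its strict inequalities encode), so your instinct about where the danger lies is sound; but identifying the obstacle is not the same as overcoming it, and your proposal does not close it.
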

\begin{remark}
The above lemma is not true if $\ell=2$ and $p_1= p_2$. Suppose $(J_1,J_2)\in B_{P_2}$. Then all $p_1$ entries of $J_1$ may coincide  with $p_2(=p_1)$ many entries of $J_2$ and hence 
$$|B_{P_2}|=O(n^{p_1-1}).$$
In this situation $|B_{P_2}|>O(n^{\frac{p_1+p_2}{2}-2})$.
\end{remark}
\begin{proof}
Let us first look at the proof for $\ell=3$. Suppose $(J_1, J_2, J_3)\in B_{P_3}$, then $(J_1, J_2, J_3)$   can be connected in the following two ways:\\\\
\noindent \textbf{Type I:} \textit{$J_1$ is connected with $J_2$, $J_2$ is connected with $J_3$, but $J_3$ is not  connected with $J_1$.} 

Suppose $r_1$ many entries of $J_1$ coincide with $r_1$ many entries of $J_2$;
$r_2$ many entries of $J_2$ coincide with $r_2$ many entries of $J_3$ where $0<r_1<p_1$, $0<r_2<p_3$ and $r_1+r_2 < p_2$, and there is no common entry between $J_1$ and $J_3$.  
\begin{figure}[h]
\centering\vskip-5pt
\includegraphics[height=20mm, width =70mm ]{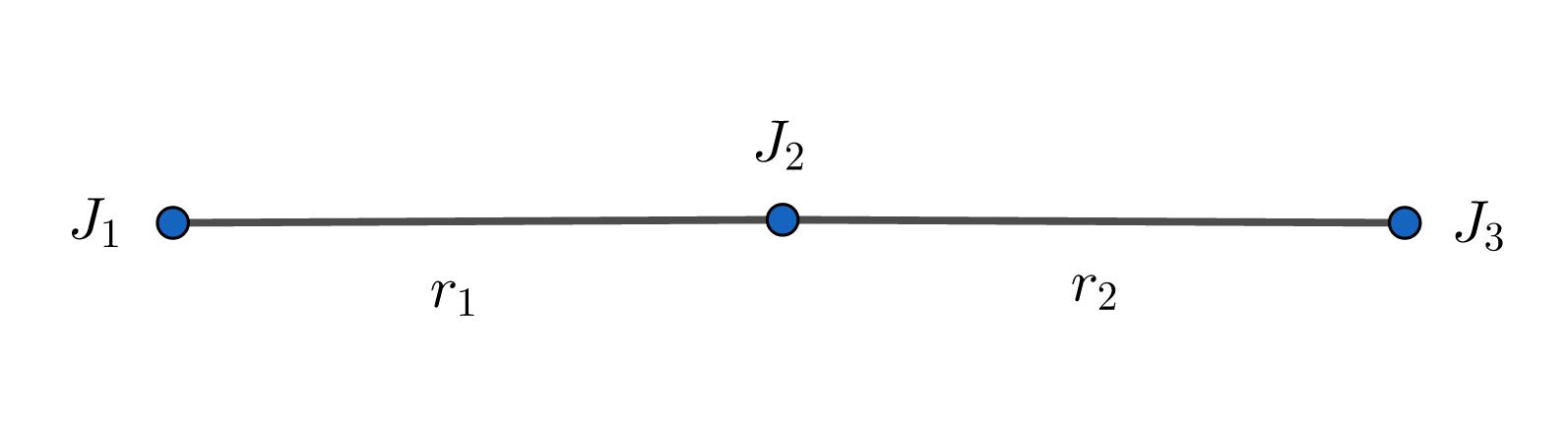}
\vskip-10pt
\caption{Connection between $J_1,J_2$ and $J_3$: Type I.}	
\end{figure} 
As each entry in $S_{J_1}\cup S_{J_2}$ has multiplicity at least two,
the $(p_1-r_1)$ many entries in $J_1$ have to be pair matched among themselves. So the contribution from $J_1$ will be $O\big(n^{(r_1+ \frac{p_1-r_1}{2}-1)}\big)$. Now after fixing entries in $J_1$, by similar argument the maximum contribution from $J_2$ will be $O\big(n^{(r_2 + \frac{p_2-r_1-r_2}{2}-1)}\big)$. After fixing $J_1$ and $J_2$, contribution from $J_3$ will be  $O\big(n^{( \frac{p_3-r_2}{2}-1)}\big)$. So  the cardinality of $B_{P_3}$  will be  
$$ O\big(n^{(r_1+ \frac{p_1-r_1}{2}-1) + (r_2 + \frac{p_2-r_1-r_2}{2}-1) +( \frac{p_3-r_2}{2}-1) }\big) 
 = O\big(n^{ \frac{p_1+p_2+p_3}{2}-3}\big).$$ 

 \noindent \textbf{Type II:} \textit{All three vectors $J_1$, $J_2$, $J_3$ are connected with each other.}
 
Suppose $r$ many entries of $J_1$ matches with  $r$ many entries of $J_2$ and $J_3$ both; $r_1$ many entries of $J_1$ coincide only with $r_1$ many entries from $J_2$; $r_2$ many entries of $J_2$ coincide only with $r_2$ many entries of $J_3$; $r_3$ many entries of $J_3$ coincide only with $r_3$ many entries of $J_1$, where
 $$r, r_1, r_2, r_3 \geq  0 \ , \ r+r_1 + r_2 \leq  p_2,\ r+r_1 + r_3\leq p_1, \ r+r_2+r_3\leq p_3. $$
\begin{figure}[h]
\centering\vskip-10pt
\includegraphics[height=40mm, width =50mm ]{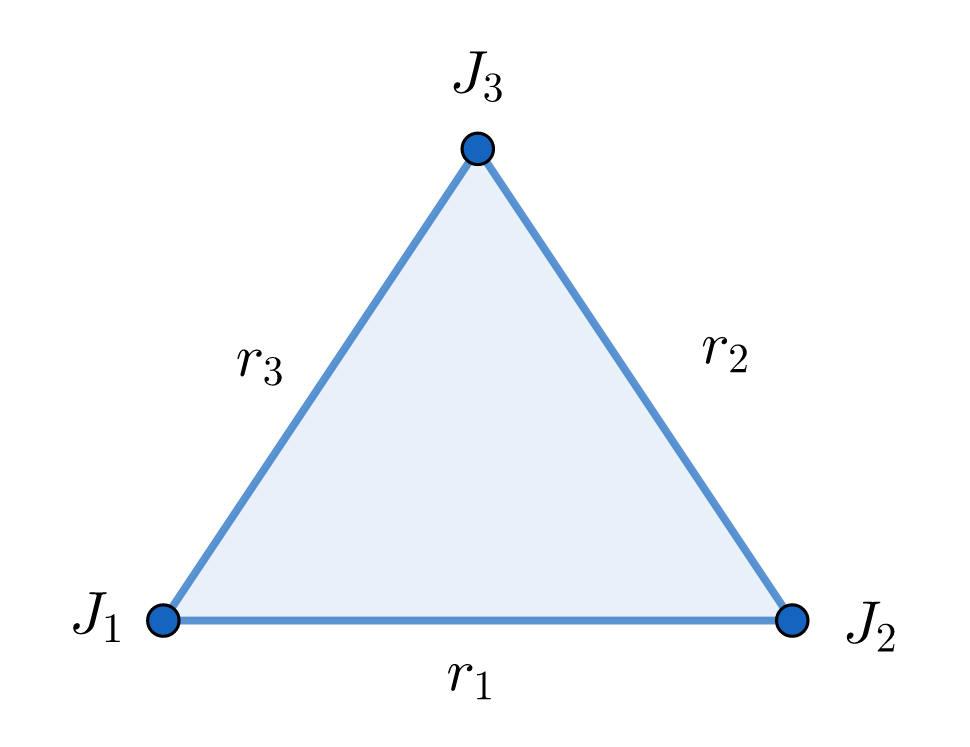}
\vskip-10pt
\caption{Connection between $J_1,J_2$ and $J_3$: Type II.}	
\end{figure} 
Like in Type I, we first fix the entries of $J_1$, then $J_2$ and then $J_3$. Therefore in this case also the cardinality  of $B_{P_3}$  will be of the order of or bounded by
\begin{align} \label{eqn:O(r)}
O & \big(n^{(r+r_1+r_3+ \frac{p_1-r-r_1-r_3}{2}-1) + (r_2 + \frac{p_2-r-r_1-r_2}{2}-1) +( \frac{p_3-r-r_2-r_3}{2}-1) }\big) \nonumber \\
& = O\big(n^{ \frac{p_1+p_2+p_3}{2}-\frac{r}{2}-3}\big).
\end{align}
Observe that, in this case the cardinality of $B_{P_3}$ will be $O \big(n^{\frac{p_1+p_2 + p_3}{2}-3}\big)$ only if $r=0$.
Therefore combining both the cases, Type I and Type II, we have
$$|B_{P_3 }| = O \big(n^{\frac{p_1+p_2 + p_3}{2}-3}\big).$$
So Lemma \ref{lem:cluster} is true for $\ell=3$.

 Also observe that in (\ref{eqn:O(r)}), $r>0$ if and only if there exists an element in  $\displaystyle{\cup_{i=1}^{3} S_{J_i} }$ which has cross-multiplicity greater than two. For $\ell \geq 3$ case, we calculate cardinality of $B_{P_\ell }$ when each element of $\displaystyle{\cup_{i=1}^{\ell} S_{J_i} }$ has cross-multiplicity less equal to two. Because if an element has cross-multiplicity greater than two, then $r>0$ and such cases have lesser contribution (in order of $n$) to the cardinality of $B_{P_\ell }$, as we have observed in $\ell =3$ case.  
 
Now we prove the lemma \ref{lem:cluster}, for $\ell$ vectors. Note that we consider that the cross multiplicity of every element of  $\displaystyle{\cup_{i=1}^{\ell} S_{J_i} }$ is less equal to two. Suppose 
$r_{1,i}$ many entries of $J_1$ coincide with 
$r_{1,i}$ many entries of 
$J_i$ for $2 \leq i \leq \ell$, where $r_{1,i} \geq 0$ for $2 \leq i \leq \ell$ and $r_{1,2}+ r_{1,3}+ \cdots + r_{1,\ell} <p_1.$
 Since $\{J_1, J_2, \ldots, J_\ell \}$ form a cluster, there is $i \in \{ 2, 3, \ldots , \ell\}$ such that $r_{1,i}>0$.
 
 Suppose 
 $r_{2,i}$ many entries of $J_2$ coincide with $r_{2,i}$ many entries of $J_i$ for $3 \leq i \leq \ell$, where $r_{2,i} \geq 0$ for $3 \leq i \leq \ell$. Also note that from the above consideration on $J_1$, $r_{1,2}$ many entries of $J_1$ coincide with $r_{1,2}$ many entries of $J_2$ and hence $r_{1,2}+ r_{2,3}+ r_{2,4}+ \cdots + r_{2,\ell} <p_2.$ 
Since $\{J_1, J_2, \ldots, J_\ell \}$ form a cluster, there is $i \in \{ 3, 4, \ldots , \ell\}$ such that $r_{2,i}>0$ or $r_{1,2}>0$.
 
 Similarly for $1\leq s \leq \ell -1$, suppose 
 $r_{s,i}$ many entries of $J_s$ coincide with $r_{s,i}$ many entries of $J_{i}$ for $s+1 \leq i \leq \ell$, where $r_{s,i} \geq 0$ for $s+1 \leq i \leq \ell$. Also note that from the above considerations on $J_1, J_2, \ldots, J_{s-1}$; $r_{i,s}$ many entries of $J_s$ coincide with $r_{i,s}$ many entries of $J_{i}$, for $1 \leq i \leq s-1$, where $r_{i,s} \geq 0$ for $1 \leq i \leq s-1$ and hence $r_{1,s}+ r_{2,s}+ \cdots + r_{s-1, s} + r_{s, s+1}+ r_{s, s+2}+ \cdots + r_{s, \ell} <p_s.$
 Since $\{J_1, J_2, \ldots, J_\ell \}$ form a cluster, there is $i \in \{ 1, 2, \ldots , s-1\}$ such that $r_{i, s}>0$ or there is $i \in \{ s+1, s+2, \ldots , \ell\}$ such that $r_{s, i}>0$
 
 From the above considerations on $J_1, J_2, \ldots, J_{\ell-1}$, note that $r_{i, \ell}$ many entries of $J_\ell$ coincide with  $r_{i, \ell}$ of $J_i$ for $1 \leq i \leq  \ell -1$, where $r_{i \ell} \geq 0$ for $1 \leq i \leq  \ell -1$ and $r_{1, \ell} + r_{2, \ell}+ \cdots + r_{\ell -1, \ell} < p_\ell.$
 Since $\{J_1, J_2, \ldots, J_\ell \}$ form a cluster, there is $i \in \{ 1, 2, \ldots , \ell-1\}$ such that $r_{i, \ell}>0$
 
 Now we calculate the contributions of ${J_i}^,s$, as we have calculated for the $\ell = 3$ case. The maximum contribution from $J_1$ will be
 \begin{equation} \label{eqn:OP_1}
 O\big( n^{ ( r_{1,2}+ r_{1,3}+ \cdots + r_{1,\ell} + \frac{p_1 - (r_{1,2}+ r_{1,3}+ \cdots + r_{1,\ell})}{2} -1) } \big).
 \end{equation}
 Now after fixing entries in $J_1$, the maximum contribution from $J_2$ will be
 \begin{equation} \label{eqn:OP_2}
 O\big( n^{ (r_{2,3}+ r_{2,4} + \cdots + r_{2,\ell} + \frac{p_2 - (r_{1,2}+ r_{2,3}+ r_{2,4}+ \cdots + r_{2,\ell})}{2} -1)} \big).
 \end{equation}
 Now similarly, after fixing entries in $J_1, J_2, \ldots , J_{s-1}$, the maximum contribution from $J_s$ will be
 \begin{equation} \label{eqn:OP_x}
 O\big(  n^{ (r_{s, s+1}+ r_{s, s+2}+ \cdots + r_{s, \ell} 
 	 + \frac{ p_s - (r_{1, s}+ r_{2, s}+ \cdots + r_{s-1, s} + r_{s, s+1}+ r_{s, s+2}+ \cdots + r_{s, \ell} ) } {2} -1 ) } \big).
 \end{equation}
 Now finally, after fixing entries in $J_1, J_2, \ldots , J_{\ell-1}$, the maximum contribution from $J_\ell$ will be
 \begin{equation} \label{eqn:OP_last}
 O\big( n^{( \frac{ p_\ell - (r_{1, \ell} + r_{2, \ell}+ \cdots + r_{\ell -1, \ell}) }{2} -1  )} \big).
 \end{equation}
 Therefore from $(\ref{eqn:OP_1})$ - $(\ref{eqn:OP_last})$, the cardinality of $B_{P_\ell}$ will be 
 \begin{align*}
 O\big(  &  n^{ ( r_{1,2}+ r_{1,3}+ \cdots + r_{1,\ell} + \frac{p_1 - (r_{1,2}+ r_{1,3}+ \cdots + r_{1,\ell})}{2} -1) } + \\
 &  \ \ \ \ n^{ (r_{2,3}+ r_{2,4} + \cdots + r_{2,\ell} + \frac{p_2 - (r_{1,2}+ r_{2,3}+ r_{2,4}+ \cdots + r_{2,\ell})}{2} -1)} + \\
 & \ \ \ \  \cdots +  n^{ (r_{s, s+1}+ r_{s, s+2}+ \cdots + r_{s, \ell} 
 	+ \frac{ p_s - (r_{1, s}+ r_{2, s}+ \cdots + r_{s-1, s} + r_{s, s+1}+ r_{s, s+2}+ \cdots + r_{s, \ell} ) } {2} -1 ) } + \\
 & \ \ \ \  \cdots + n^{( \frac{ p_\ell - (r_{1, \ell} + r_{2, \ell}+ \cdots + r_{\ell -1, \ell}) }{2} -1  )} \big) \\
 & \ \ \ \  = O \big(n^{\frac{p_1+p_2 + \cdots + p_\ell}{2}-\ell}\big).
 \end{align*}
Hence
$$|B_{P_\ell }| = O \big(n^{\frac{p_1+p_2 + \cdots + p_\ell}{2}-\ell}\big).$$
This complete the proof of Lemma \ref{lem:cluster}.

\end{proof}
The following lemma is an easy consequence of Lemma \ref{lem:cluster}.
\begin{lemma}\label{lem:maincluster}
Suppose $\{J_1, J_2, \ldots, J_\ell \} $ form a cluster where $J_i\in A_{p_i}$ with $p_i\geq 2$ for $1\leq i\leq \ell$. Then for $\ell \geq 3,$
\begin{equation}\label{equation:maincluster}
	\frac{1}{ n^{\frac{p_1+p_2+ \cdots + p_\ell - \ell}{2}} } \sum_{A_{p_1}, A_{p_2}, \ldots, A_{p_\ell}} \E\Big[\prod_{k=1}^{\ell}\Big(b_{J_k}(t_k) - \E(b_{J_k}(t_k))\Big)\Big] = o(1),
\end{equation}
where $0<t_1 \leq t_2 \leq \cdots \leq t_\ell $ and  for $k \in \{1,2, \ldots, \ell \} $, 
$$J_k = (j_{k,1}, j_{k,2}, \ldots, j_{k,p_k} ) \ \mbox{and} \ b_{J_k}(t_k) = b_{j_{k,1}}(t_k) b_{j_{k,2}}(t_k) \cdots b_{j_{k,p_k}}(t_k).$$	
\end{lemma}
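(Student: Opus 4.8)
The plan is to reduce the estimate to the cardinality bound of Lemma \ref{lem:cluster}. Write $P = p_1 + p_2 + \cdots + p_\ell$ and set $D_k = b_{J_k}(t_k) - \E[b_{J_k}(t_k)]$, so that the quantity to be controlled is $n^{-(P-\ell)/2}\sum \E[\prod_{k=1}^{\ell} D_k]$, where the sum runs over those tuples $(J_1,\ldots,J_\ell)\in A_{p_1}\times\cdots\times A_{p_\ell}$ that form a cluster. The two facts I would isolate are: (i) only tuples lying in $B_{P_\ell}$ can make $\E[\prod_k D_k]$ nonzero, and (ii) on every such tuple the centred expectation is bounded by a constant independent of $n$. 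Granting these, the sum has at most $|B_{P_\ell}| = O(n^{P/2-\ell})$ nonzero terms by Lemma \ref{lem:cluster}, and the whole expression is $O\big(n^{-(P-\ell)/2}\cdot n^{P/2-\ell}\big) = O(n^{-\ell/2}) = o(1)$.

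To establish (ii), I would observe that all the factors $b_{j_{k,m}}(t_k)$ are jointly centred Gaussian, with covariances $\E[b_i(t_k)b_{i'}(t_{k'})] = \delta_{i,i'}\min\{t_k,t_{k'}\}$, in particular bounded in absolute value by $t_\ell$. Expanding the product $\prod_k D_k$ into finitely many terms and applying the Wick--Isserlis formula to each writes $\E[\prod_k D_k]$ as a finite sum of products of at most $P/2$ such covariances; hence $|\E[\prod_k D_k]| \leq C$ for a constant $C = C(p_1,\ldots,p_\ell,t_\ell)$ uniform in $n$ and in the choice of indices.

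For the reduction (i), I would argue that if some index value $a$ occurs with total multiplicity one in $\cup_{i=1}^{\ell} S_{J_i}$ -- say it appears as a single factor of $b_{J_{k_0}}(t_{k_0})$ and nowhere else -- then, since the Brownian motion $b_a$ is independent of all $b_i$ with $i\neq a$ and has mean zero, one can factor $b_a(t_{k_0})$ out of $D_{k_0}$ (the centring constant $\E[b_{J_{k_0}}(t_{k_0})]$ carries no $b_a$, so $D_{k_0} = b_a(t_{k_0})\cdot(\text{independent factor})$), whence $\E[\prod_k D_k] = 0$. Thus a nonzero contribution forces every element of $\cup_i S_{J_i}$ to have multiplicity at least two; combined with the standing cluster hypothesis, this places the tuple in $B_{P_\ell}$, exactly the set counted in Lemma \ref{lem:cluster}.

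The genuinely hard combinatorial work -- counting the connected, multiplicity-$\geq 2$ configurations -- has already been carried out in Lemma \ref{lem:cluster}, so what remains is mainly bookkeeping: checking that the exponent $P/2-\ell$ from the cardinality bound beats the normalisation exponent $(P-\ell)/2$, leaving the surplus factor $n^{-\ell/2}$. I would emphasise that the restriction $\ell \geq 3$ enters not through this power of $n$ (which already vanishes for $\ell\geq 1$) but because the bound $|B_{P_\ell}| = O(n^{P/2-\ell})$ on which the whole argument rests is itself valid only for $\ell \geq 3$, the case $\ell=2$ with $p_1=p_2$ being genuinely exceptional.
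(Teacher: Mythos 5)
Your proposal is correct and follows essentially the same route as the paper: reduce the sum to tuples in $B_{P_\ell}$ by the mean-zero/independence argument, bound the centred expectation uniformly in $n$ (the paper via the moment bound $\E|b_i(t_j)|^{2m}=\frac{(2m)!}{2^m m!}t_\ell^m$ rather than Wick--Isserlis, an immaterial difference), and invoke Lemma \ref{lem:cluster} to get $O(n^{-\ell/2})=o(1)$. Your closing remark correctly identifies that the hypothesis $\ell\geq 3$ is needed only because the cardinality bound $|B_{P_\ell}|=O(n^{P/2-\ell})$ fails for $\ell=2$ with $p_1=p_2$, not because of the power of $n$ itself.
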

\begin{proof} First observe that   $\E\Big[\prod_{k=1}^{\ell}\Big(b_{J_k}(t_k) - \E(b_{J_k}(t_k))\Big)\Big]$ will be non-zero only if each $b_i(t)$ appears at least twice in the collection $\{b_{j_{k,1}}(t_k), b_{j_{k,2}}(t_k), \ldots ,b_{j_{k,p_k}}(t_k); 1\leq k\leq \ell\}$, because $\E(b_i(t))=0$ for $t\geq 0$. Therefore
\begin{equation}\label{eqn:equality_reduction}
\sum_{A_{p_1}, \ldots, A_{p_\ell}} \hspace{-3pt}\E\Big[\prod_{k=1}^{\ell}\Big(b_{J_k}(t_k) - \E(b_{J_k}(t_k))\Big)\Big]=\sum_{(J_1,\ldots,J_\ell)\in B_{P_\ell}} \hspace{-3pt} \E\Big[\prod_{k=1}^{\ell}\Big(b_{J_k}(t_k) - \E(b_{J_k}(t_k))\Big)\Big],
\end{equation} 
where $B_{P_\ell}$ as in Definition \ref{def:B_{P_l}}. Also note that  for $0<t_1 \leq t_2 \leq \cdots \leq t_\ell $ and $m\in \mathbb N$,  
\begin{equation}\label{eqn:higher moment finite}
\sup_{1\leq j\leq \ell}\E|b_i(t_j)|^{2m}=\frac{(2m)!}{2^m m!}t_\ell^m,
\end{equation}  
as $\{b_i(t);t\geq 0\}_{i\geq 0}$ are independent standard Brownian motions. Therefore for fixed $0<t_1 \leq t_2 \leq \cdots \leq t_\ell $ and $p_1,p_2,\ldots,p_\ell \geq 2$, there exists $\alpha_\ell>0$, which depends only on $t_\ell$ and $p_1,p_2,\ldots,p_\ell$, such that  
\begin{equation}\label{eqn:modulus finite}
\Big|\E\big[\prod_{k=1}^{\ell}\big(b_{J_k}(t_k) - \E(b_{J_k}(t_k))\big)\big]\Big|\leq \alpha_\ell
\end{equation} 
for all $(J_1, J_2, \ldots, J_\ell)\in A_{p_1}\times A_{p_2}\times \cdots \times A_{p_\ell}$.
 
Now using \eqref{eqn:equality_reduction} and \eqref{eqn:modulus finite}, we have
\begin{align*}
	 \sum_{A_{p_1}, A_{p_2}, \ldots, A_{p_\ell}} \Big|\E\big[\prod_{k=1}^{\ell}\big(b_{J_k}(t_k) - \E(b_{J_k}(t_k))\big)\big]\Big|
 \leq  \sum_{(J_1,J_2,\ldots,J_\ell)\in B_{P_\ell}} \alpha_{\ell} 
\ = |B_{p_\ell}| \ \alpha_\ell.
\end{align*}
Now \eqref{equation:maincluster} follows from Lemma \ref{lem:cluster}. This completes the proof. 
\end{proof}

We shall use the above lemmata to prove Theorem \ref{thm:cirmulti}.

\begin{proof}[Proof of Theorem \ref{thm:cirmulti}] We use Cram\'er-Wold device to prove Theorem \ref{thm:cirmulti}. So it is enough to show that, for $0<t_1 \leq t_2 \leq \cdots \leq t_\ell $ and $p_1, p_2, \ldots , p_\ell \geq 2$,
$$\lim_{n\to\infty}\E[w_{p_1}(t_1)w_{p_2}(t_2) \cdots w_{p_\ell}(t_\ell)]=\E[N_{p_1}(t_1)N_{p_2}(t_2) \cdots N_{p_\ell}(t_\ell)].$$
Now using trace formula (\ref{trace formula C_n}), we have  
\begin{align*}
w_{p_k}(t_{k}) & = \frac{1}{\sqrt{n}} \Big(\Tr(C_n(t_k))^{p_k} - \E[\Tr(C_n(t_k))^{p_k}]\Big)\\
 &= \frac{1}{n^{\frac{p_k - 1}{2}}} \sum_{A_{p_k}} \Big( b_{j_{k,1}}(t_k)\cdots b_{j_{k,p_k}}(t_k) - \E[b_{j_{k,1}}(t_k)\cdots b_{j_{k,p_k}}(t_k)]\Big).
\end{align*}
Note that in the above summation $(j_{k,1},j_{k,2},\ldots,j_{k,p_k})\in A_{p_k}$.
Therefore 
\begin{align}\label{eqn:expectation_thm2}
&\quad \E[w_{p_1}(t_1) \cdots w_{p_\ell}(t_\ell)] \\
&= \frac{1}{n^{\frac{p_1 + p_2 + \cdots +p_\ell -\ell}{2}}} \sum_{A_{p_1}, A_{p_2}, \ldots, A_{p_\ell}} \E\big[ (b_{J_1} - \E b_{J_1}) (b_{J_2} - \E b_{J_2})  \cdots (b_{J_\ell} - \E b_{J_\ell})\big].\nonumber
 \end{align}


Now for a fixed $J_1,J_2,\ldots,J_\ell$, if there exists a $k\in\{1,2,\ldots,\ell\}$ such that $J_k$ is not connected with any $J_i$ for $i\neq k$, then 
$$\E\big[ (b_{J_1} - \E b_{J_1}) (b_{J_2} - \E b_{J_2})  \cdots (b_{J_\ell} - \E b_{J_\ell})\big]=0$$
due to the independence of Brownian motions $\{b_i\}_{i\geq 0}$.

Therefore $J_1,J_2,\ldots,J_\ell$ must form  clusters with each cluster length greater equal to two, that is, each cluster should contain at least two vectors. Suppose $C_1,C_2,\ldots,C_s$ are the clusters formed by  vectors $J_1,J_2,\ldots,J_\ell$ and   $|C_i|\geq 2$ for all $1\leq i \leq s$ where $|C_i|$ denotes the length of the cluster $C_i$. Observe that $\sum_{i=1}^s |C_i|=\ell$.   

If there exists  a cluster $C_j$ among $C_1,C_2,\ldots,C_s$ such that $|C_j|\geq 3$, then from Theorem \ref{thm:circovar} and Lemma \ref{lem:maincluster}, we have  
\begin{align*}
\frac{1}{n^{\frac{p_1 + p_2 + \cdots +p_\ell -\ell}{2}}} \sum_{A_{p_1}, A_{p_2}, \ldots, A_{p_\ell}} \E\big[ (b_{J_1} - \E b_{J_1}) (b_{J_2} - \E b_{J_2})  \cdots (b_{J_\ell} - \E b_{J_\ell})\big]=o(1).
\end{align*}  
Therefore, if $\ell$ is odd then there will be a cluster of odd length and hence 
$$ \lim_{n\tends \infty}  \E[w_{p_1}(t_1) \cdots w_{p_\ell}(t_\ell)] = 0.$$ 
 
Similarly, if $\ell$ is even then the contribution due to $\{ J_1, J_2, \ldots, J_\ell \}$ to  $ \E[w_{p_1}(t_1) \cdots w_{p_\ell}(t_\ell)]$ is $O(1)$ only when $\{ J_1, J_2, \ldots, J_\ell\}$  decomposes into clusters of length 2. Therefore from \eqref{eqn:expectation_thm2}, we get
\begin{align*} \label{eq:multisplit}
 & \quad \lim_{n\tends \infty}  \E[w_{p_1}(t_1) \cdots w_{p_\ell}(t_\ell)]\\
 & =\lim_{n\to\infty} \frac{1}{n^{\frac{p_1 + p_2 + \cdots +p_\ell -\ell}{2}}} \sum_{A_{p_1}, A_{p_2}, \ldots, A_{p_\ell}} \E\big[ (b_{J_1} - \E b_{J_1}) (b_{J_2} - \E b_{J_2})  \cdots (b_{J_\ell} - \E b_{J_\ell})\big]\\
 & =\lim_{n\to\infty} \frac{1}{n^{\frac{p_1 + p_2 + \cdots +p_\ell -\ell}{2}}} \sum_{\pi \in \mathcal P_2(\ell)} \prod_{i=1}^{\frac{\ell}{2}}  \sum_{A_{p_{y(i)}},\ A_{p_{z(i)}}} \E\big[ (b_{J_{y(i)}} - \E b_{J_{y(i)}}) (b_{J_{z(i)}} - \E b_{J_{z(i)}})\big],
 \end{align*}
where  $\pi = \big\{ \{y(1), z(1) \}, \ldots , \{y(\frac{\ell}{2}), z(\frac{\ell}{2})  \} \big\}\in \mathcal P_2(\ell)$ and $\mathcal P_2(\ell)$ is the set of all pair partition of $ \{1, 2, \ldots, \ell\} $. Using Theorem \ref{thm:circovar}, from the last equation we get
\begin{equation}\label{eqn:product of expectation}
 \lim_{n\tends \infty}  \E[w_{p_1}(t_1) \cdots w_{p_\ell}(t_\ell)]
  =\sum_{\pi \in P_2(\ell)} \prod_{i=1}^{\frac{\ell}{2}} \lim_{n\tends \infty} \E[w_{p_{y(i)}} (t_{y(i)}) w_{p_{z(i)}} (t_{z(i)})].
  \end{equation}
Now from Corollary \ref{cor:existence of Gaussian process}, there exists a centred Gaussian process $\{N_p(t);t\geq 0\}_{p\geq 2}$ such that 
$$\E(N_p(t_1)N_q(t_2))=\lim_{n\to\infty}\E(w_p(t_1)w_q(t_2)).$$
Therefore using Wick's formula,  from \eqref{eqn:product of expectation} we get 
\begin{align*}
\lim_{n\tends \infty}  \E[w_{p_1}(t_1) \cdots w_{p_\ell}(t_\ell)]
  &=\sum_{\pi \in \mathcal P_2(\ell)} \prod_{i=1}^{\frac{\ell}{2}} \lim_{n\tends \infty} \E[w_{p_{y(i)}} (t_{y(i)}) w_{p_{z(i)}} (t_{z(i)})]\\
 & =\sum_{\pi \in \mathcal P_2(\ell)} \prod_{i=1}^{\frac{\ell}{2}} \E[N_{p_{y(i)}} (t_{y(i)}) N_{p_{z(i)}}  (t_{z(i)})] \\
 &=\E[ N_{p_1}(t_1)N_{p_2}(t_2) \cdots N_{p_\ell}(t_\ell) ].
\end{align*}
This completes the proof of Theorem \ref{thm:cirmulti}.

\end{proof}

\section{Proof of Theorem \ref{thm:process}}\label{sec:process convergence}
 We use Theorem \ref{thm:cirmulti} and some \textit{combinatorial techniques} to prove Theorem \ref{thm:process}. For fixed $p \geq 2$, we want to show that $ \{ w_p(t) ; t \geq 0\} \stackrel{\mathcal D}{\rightarrow} \{N_p(t) ;t \geq 0\} $, where the existence of $\{N_p(t) ; t \geq 0\}$ is coming from Corollary \ref{cor:existence of Gaussian process}. We first state some results which will be used in the proof of Theorem \ref{thm:process}.

Suppose $C_{\infty}:= C[0, \infty)$ be the space of all continuous real-valued function on $[0, \infty)$. Then $(C_{\infty}, \rho )$ is a metric space with metric
$$ \rho(X,Y) = \sum_{k=1}^{\infty} \frac{1}{2^k} \frac{\rho_k(X,Y)}{1+ \rho_k(X,Y)} \ \ \forall \ X, Y \in C_{\infty},$$  
where $\rho_k(X,Y) = \sup_{0 \leq t \leq k} |X(t)- Y(t)|$ is the usual metric on $C_k := C[0,k]$, the space of all continuous real-valued function on $[0, k]$. Suppose $\mathcal{C_{\infty}}$ and $\mathcal{C}_k$ be the $\sig$-field generated by the open sets of $C_{\infty}$ and $C_k$, respectively. Let $\{ \P_n \}$ and $\P$ be probability measure on $( C_{\infty},\mathcal{C_{\infty}} )$. If 
$$ \P_n f := \lim_{n\tends \infty} \int_{C_{\infty}} f d \P_n \tends \P f := \int_{C_{\infty}} f d \P ,$$
 for every bounded, continuous real-valued function $f$ on $C_{\infty}$, then we say $\P_n$ converge to $\P$ \textit{weakly} or in \textit{distribution}. we denote it by $\P_n \stackrel{\mathcal D}{\rightarrow} \P $. The following result gives if and only if condition for the weak convergence of $\P_n$ to $\P$.
%
	
\begin{result} \textbf{(Theorem 3, \cite{Ward1970})}  \label{result:process convergance}
	Suppose $\{ \P_n \}$ and $\P$ are probability measures on  $( C_{\infty},\mathcal{C_{\infty}} )$. Then  $\P_n \stackrel{\mathcal D}{\rightarrow} \P$ if and only if:
	
	\noindent \textbf{(i)} the finite-dimensional distributions of $\P_n$ converge weakly to those of $\P$, that is,
	\begin{equation} \label{eqn:P_nGamma convergence}
	\P_n \pi^{-1}_{t_1\cdots t_r} \stackrel{\mathcal D}{\rightarrow} \P \pi^{-1}_{t_1\cdots t_r} \ \ \forall \ r \in \mathbb{N} \ \mbox{and} \ \forall \ r\mbox{-tuples} \ t_1, \ldots, t_r, \nonumber
	\end{equation}
	where $\pi_{t_1\cdots t_r}$ is the natural projection from $C_{\infty}$ to $\mathbb{R}^r$ for all $r \in \mathbb{N}$, and 
	\vskip5pt
	
	\noindent \textbf{(ii)} the sequence $\{ \P_n \} $ is tight.
\end{result}
 Suppose $\{ \boldsymbol{X_n} \} = \{ X_n (t); t \geq 0\}$ is a sequence of continuous stochastic process defined on some probability space $( \Omega, \mathcal{F}, P )$, that is, $\boldsymbol{X_n}$ is a measurable map from  $( \Omega, \mathcal{F}, P)$ to $(C_{\infty},\mathcal{C_{\infty}})$.
Let $\{ \P_n\}$ be the sequence of probability measure on  $(C_{\infty},\mathcal{C_{\infty}})$ induced by $ \boldsymbol{X_n}$.
The following result provide a sufficient condition for the tightness of the probability measure $\{ \P_n\}$.
\begin{result} \textbf{(Theorem I.4.3, \cite{Ikeda1981})}\label{result:tight2}
	Suppose $\{\boldsymbol{X_n}\}$ = $\{ X_n(t); t \geq 0 \}$, $n\in \mathbb{N}$ be a sequence of continuous processes satisfying the following two conditions:
	
    \noindent	\textbf{(i)} there exists positive constants $M$ and $\gamma$ such that
	$$\E{|X_n(0)|^ \gamma } \leq M \ \ \ \forall \ n \in \mathbb{N},$$ 	 
    \noindent	\textbf{(ii)} there exists positive constants $\alpha, \ \beta$ and $M_T$, $T = 1, 2, \ldots,$ such that
	$$\E{|X_n(t)- X_n(s)|^ \alpha } \leq M_T |t-s|^ {1+ \beta} \ \ \ \forall \ n \in \mathbb{N} \ \mbox{and } t,s \in[0,\ T],(T= 1, 2, \ldots,).$$
	Then the sequence $\{ \boldsymbol{X_n} \} $ is tight and the probability measure $\{ \P_n \}$, induced by $\{ \boldsymbol{X_n} \} $ is also tight.	
\end{result}
In our setup $X_n(t)$ is $w_p(t)$, where $w_p(t)$ is as defined in (\ref{eqn:w_p(t)}). Now from Result \ref{result:process convergance} and Result \ref{result:tight2}, it is clear that, to prove Theorem \ref{thm:process}, it is sufficient to prove the following two propositions: 

\begin{proposition} \label{pro:finite convergence}
	For each $ p\geq  2$, suppose $0<t_1<t_2 \cdots <t_r$. Then as $n \tends \infty$
	\begin{equation}
	(w_{p}(t_1), w_{p}(t_2), \ldots , w_{p}(t_r)) \stackrel{\mathcal D}{\rightarrow} (N_{p}(t_1), N_{p}(t_2), \ldots , N_{p}(t_r)).
	\end{equation}	
\end{proposition}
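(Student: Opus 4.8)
The plan is to obtain Proposition \ref{pro:finite convergence} as the special case of Theorem \ref{thm:cirmulti} in which all the powers coincide, namely $p_1 = p_2 = \cdots = p_r = p$. Since Theorem \ref{thm:cirmulti} is established by the Cram\'er--Wold device together with the method of moments, and nowhere in that argument is the distinctness of the powers $p_i$ actually used, the same reasoning applies once we fix the common value $p$. Concretely, I would first reduce, via the Cram\'er--Wold device, the convergence of the random vector $(w_p(t_1), \ldots, w_p(t_r))$ to the one-dimensional statement that $\sum_{i=1}^r a_i w_p(t_i)$ converges in distribution to $\sum_{i=1}^r a_i N_p(t_i)$ for every $(a_1, \ldots, a_r) \in \mathbb{R}^r$, the limit being a single centred Gaussian whose existence is guaranteed by Corollary \ref{cor:existence of Gaussian process}.

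Next I would prove this one-dimensional convergence by the method of moments, that is, by showing that for every $\ell \geq 1$ and every choice $m_1, \ldots, m_\ell \in \{1, \ldots, r\}$,
\[
\lim_{n \to \infty} \E\Big[\prod_{k=1}^{\ell} w_p(t_{m_k})\Big]
= \E\Big[\prod_{k=1}^{\ell} N_p(t_{m_k})\Big].
\]
Expanding each factor through the trace formula \eqref{trace formula C_n} turns the left-hand side into a normalized sum over $A_p^{\ell}$, and the vectors $J_1, \ldots, J_\ell$ contribute only when they decompose into clusters each of length at least two. Here Lemma \ref{lem:maincluster} performs the essential suppression: any cluster of length $\geq 3$ contributes $o(1)$, so for odd $\ell$ the limit is $0$, while for even $\ell$ only decompositions into pairs survive. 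On each surviving pair the limiting expectation is $\lim_n \E[w_p(t_{m_a}) w_p(t_{m_b})]$, which by Theorem \ref{thm:circovar} (the diagonal case $p = q$) equals $(\min\{t_{m_a}, t_{m_b}\})^p\, p! \sum_{s=0}^{p-1} f_p(s)$. Summing over all pair partitions of $\{1, \ldots, \ell\}$ then reproduces exactly Wick's formula for the centred Gaussian family $\{N_p(t)\}$, yielding the claimed moment limit and hence, by the method of moments, the distributional convergence.

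The step that requires the most care is the treatment of clusters of length exactly two. The Remark following Lemma \ref{lem:cluster} shows that Lemma \ref{lem:cluster} genuinely fails for a single cluster of length two with $p_1 = p_2 = p$, which is precisely the configuration of every surviving pair here. This is not a gap, however, because the pairwise terms are never estimated through Lemma \ref{lem:cluster}; they are evaluated \emph{exactly} by Theorem \ref{thm:circovar}, and the role of Lemma \ref{lem:maincluster} is confined to discarding clusters of length three or more. I would therefore keep the two mechanisms strictly separate---exact evaluation of pairs via Theorem \ref{thm:circovar}, and asymptotic suppression of longer clusters via Lemma \ref{lem:maincluster}---and note that, since all powers equal $p$, every pairwise covariance is strictly positive, so the limiting Gaussian vector has the full nondegenerate covariance matrix $\big((\min\{t_i,t_j\})^p\, p!\, \sum_{s=0}^{p-1} f_p(s)\big)_{i,j}$ rather than the block-diagonal structure that arises when distinct powers are mixed.
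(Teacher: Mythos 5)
Your proposal is correct and follows essentially the same route as the paper, which proves Proposition \ref{pro:finite convergence} simply by specializing Theorem \ref{thm:cirmulti} to $p_1 = p_2 = \cdots = p_r = p$ (the hypotheses of that theorem place no distinctness requirement on the $p_i$). Your additional unpacking of the cluster argument---in particular the observation that surviving pairs are evaluated exactly by Theorem \ref{thm:circovar} rather than estimated by Lemma \ref{lem:cluster}, which fails for length-two clusters with equal powers---is an accurate account of the internals of the paper's proof of Theorem \ref{thm:cirmulti}, though it is not needed once that theorem is invoked.
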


\begin{proposition} \label{pro:tight}
	For each $p \geq2$, there exists positive constants $M$ and $\gamma$ such that
	\begin{equation} \label{eq:tight3}
	\E{|w_p(0)|^ \gamma } \leq M \ \ \ \forall \ n \in \mathbb{N}, 	 
	\end{equation}
	there exists positive constants $\alpha, \ \beta $ and $M_T$, $T= 1, 2, \ldots,$ such that
	\begin{equation} \label{eq:tight4}
	\E{|w_p(t)- w_p(s)|^ \alpha } \leq M_T |t-s|^ {1+ \beta} \ \ \ \forall \ n \in \mathbb{N} \ \mbox{and } t,s \in[0,\ T],(T= 1, 2, \ldots,).
	\end{equation}
	Then $\{ w_p(t) ; t \geq 0 \}$ is tight.
\end{proposition}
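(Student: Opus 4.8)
The plan is to verify the two moment hypotheses of Proposition~\ref{pro:tight}, which are precisely conditions (i) and (ii) of Result~\ref{result:tight2}, and then appeal to Result~\ref{result:tight2} to conclude tightness. Condition \eqref{eq:tight3} is immediate: since $b_i(0)=0$ for all $i$, we have $C_n(0)=0$, hence $\Tr(C_n(0))^p=0$ and $w_p(0)=0$ almost surely, so $\E|w_p(0)|^\gamma=0\le M$ for any $\gamma,M>0$. The real content is the increment bound \eqref{eq:tight4}, which I would prove with the choice $\alpha=4$ and $\beta=1$; that is, I aim to show
\[
\E\big|w_p(t)-w_p(s)\big|^4\le M_T\,|t-s|^2\qquad\text{for all }s,t\in[0,T],
\]
uniformly in $n$. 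Taking $\alpha=4,\beta=1$ keeps the computation to a fourth moment while giving $1+\beta=2>1$, as required by Result~\ref{result:tight2}.

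Assume without loss $t\ge s$ and write $h=t-s$. Using the trace formula \eqref{trace formula C_n} I would first write
\[
w_p(t)-w_p(s)=\frac{1}{n^{(p-1)/2}}\sum_{A_p}Z_I,\qquad Z_I=\big(b_I(t)-b_I(s)\big)-\E\big[b_I(t)-b_I(s)\big],
\]
where $b_I(\cdot)=b_{i_1}(\cdot)\cdots b_{i_p}(\cdot)$ for $I=(i_1,\dots,i_p)\in A_p$, so each $Z_I$ is centred. Raising to the fourth power and taking expectations gives
\[
\E\big|w_p(t)-w_p(s)\big|^4=\frac{1}{n^{2(p-1)}}\sum_{I_1,I_2,I_3,I_4\in A_p}\E\big[Z_{I_1}Z_{I_2}Z_{I_3}Z_{I_4}\big].
\]
Exactly as in the proof of Theorem~\ref{thm:cirmulti}, centring forces the surviving terms to be those for which $\{I_1,I_2,I_3,I_4\}$ splits into clusters each of length at least two: if some $I_k$ is disconnected from the other three, then $Z_{I_k}$ is independent of the rest and $\E Z_{I_k}=0$ annihilates the term. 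Thus only the cluster types $\{2,2\}$ and $\{4\}$ contribute.

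Two ingredients then close the estimate. The first is a uniform per-term bound. Decomposing $b_{i_j}(t)=b_{i_j}(s)+\big(b_{i_j}(t)-b_{i_j}(s)\big)$, where the increment is distributed as $N(0,h)$ and is independent of $\{b_k(s)\}_{k\ge0}$, one sees that every term in the expansion of $b_I(t)-b_I(s)$ carries at least one such increment, while the centring constant $\E[b_I(t)-b_I(s)]$ is $O(h)$; since Brownian moments up to a fixed order are bounded on $[0,T]$, this yields $\|Z_I\|_{L^4}\le C_{T,p}\,h^{1/2}$ uniformly over $I\in A_p$ and $n$. By the generalized H\"older inequality, $\big|\E[Z_{I_1}Z_{I_2}Z_{I_3}Z_{I_4}]\big|\le C_{T,p}^4\,h^2$ for every tuple. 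The second ingredient is counting: by Lemma~\ref{lem:cluster} and the Remark following it, the number of contributing tuples is $O(n^{2(p-1)})$, the $\{2,2\}$ configurations contributing $O(n^{p-1})\cdot O(n^{p-1})$ while the single $\{4\}$ clusters contribute only $O(n^{2(p-2)})=o(n^{2(p-1)})$. Combining the two ingredients,
\[
\E\big|w_p(t)-w_p(s)\big|^4\le \frac{1}{n^{2(p-1)}}\cdot O\big(n^{2(p-1)}\big)\cdot C_{T,p}^4\,h^2=M_T\,|t-s|^2,
\]
which is \eqref{eq:tight4} with $\alpha=4,\beta=1$; tightness follows from Result~\ref{result:tight2}.

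The main obstacle is the uniform $L^4$ bound $\|Z_I\|_{L^4}\le C_{T,p}\,h^{1/2}$, with a constant independent of the index $I$ and of $n$. This requires checking every self-matching pattern of the entries of $I$, including degenerate ones such as all entries equal, and controlling separately the random part $b_I(t)-b_I(s)$ (which must be $O(h^{1/2})$ in $L^4$) and the deterministic centring $\E[b_I(t)-b_I(s)]$ (which must be $O(h)$, hence $O(h^{1/2})$ on $[0,T]$). Everything else, namely the vanishing of non-clustered terms and the cluster count $O(n^{2(p-1)})$, is already supplied by the combinatorial machinery built for Theorem~\ref{thm:cirmulti}.
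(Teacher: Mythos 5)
Your proof is correct, and it takes a genuinely different route from the paper's. The paper never expands $w_p(t)-w_p(s)$ as a centred index sum; instead it writes $C_n(t)=C_n'(t-s)+C_n(s)$ with an independent circulant increment matrix, binomially expands $\Tr(C_n(t))^p-\Tr(C_n(s))^p$, splits $\E[w_p(t)-w_p(s)]^4$ into a fluctuation part $R_1$ and a deterministic part $R_2$, and bounds each resulting trace term ($T_0$, $T_d$, $W_0$, $W_d$) by bespoke matching counts over the sets $D_{p_4}$ and $D^c_{p_4}$, obtaining powers $(t-s)^{2d}$ and $(t-s)^{d/2}$ that are then reduced to $(t-s)^2$ using $t,s\in[0,T]$. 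You instead expand the fourth moment of the centred sum $\sum_{A_p}Z_I$ directly, annihilate quadruples containing a disconnected vector by independence plus centring, count the surviving $\{2,2\}$ and $\{4\}$ configurations via Lemma \ref{lem:cluster} and the Remark following it, and extract the factor $|t-s|^2$ once and for all from the uniform bound $\|Z_I\|_{L^4}\leq C_{T,p}|t-s|^{1/2}$ via generalized H\"older. What your route buys: it recycles the cluster machinery already built for Theorem \ref{thm:cirmulti} instead of the ad hoc $D_{p_4}$ bookkeeping, and it absorbs the paper's entire Step 2 (the $R_2$ estimate on $\E[\Tr(C_n(t))^p-\Tr(C_n(s))^p]$) automatically, since the centring sits inside each $Z_I$; what the paper's route buys is sharper information in $d$ (the explicit powers $(t-s)^{2d}$), which your uniform $h^{1/2}$ bound deliberately forgoes. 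Two points you should make explicit to close the argument: first, besides cluster length at least two, you need every index to appear at least twice in the multiset union of each cluster --- an index appearing exactly once kills the expectation, because $Z_I$ is then of the form $b_{i_0}(t)A_t-b_{i_0}(s)A_s$ with $A_t,A_s$ independent of the mean-zero factor $b_{i_0}$ --- and it is precisely this multiplicity condition that licenses the appeal to Lemma \ref{lem:cluster}, whose set $B_{P_\ell}$ is defined with it; second, the paper's cluster lemmata are stated for single-time products $b_{J_k}(t_k)$, whereas your $Z_I$ mixes the two times $s$ and $t$, so while the counting in Lemma \ref{lem:cluster} is purely combinatorial and applies verbatim, the two vanishing arguments must be rerun for mixed-time factors (they go through unchanged, as just indicated). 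With these observations supplied, your per-term estimate is routine: each term of $b_I(t)-b_I(s)$ carries at least one increment of $L^{4p}$-norm $O(h^{1/2})$, the remaining factors have moments bounded by powers of $T$, and the centring constant contributes $O(h)$, so the proof is complete.
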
 
    Proposition \ref{pro:finite convergence} describes finite dimensional joint fluctuation of $\{w_p(t);t \geq 0\}$ as $n \to \infty$ and Proposition \ref{pro:tight} describes tightness of the process $\{w_p(t);t \geq 0\}$.

\begin{proof}[Proof of Proposition \ref{pro:finite convergence}]
	Proposition \ref{pro:finite convergence} is a particular case of Theorem \ref{thm:cirmulti}. If we take $p_i = p$, in Theorem \ref{thm:cirmulti}, then we get 
	\begin{align*}
	\lim_{n\to \infty} & \P  \Big( w_{p}(t_1)  \leq a_1,       w_{p}(t_2)  \leq a_2,\ldots, w_{p}(t_r)  \leq a_r \Big) \nonumber \\
	&= \P  \Big( N_{p}(t_1)  \leq a_1,                              N_{p}(t_2)  \leq a_2, \ldots, N_{p}(t_r)  \leq a_r \Big).
	\end{align*}
	This shows that, as $n \tends \infty$
	$$(w_{p}(t_1), w_{p}(t_2), \ldots , w_{p}(t_r)) \stackrel{\mathcal D}{\rightarrow} (N_{p}(t_1), N_{p}(t_2), \ldots , N_{p}(t_r)).$$
	This complete the proof of Proposition \ref{pro:finite convergence}.
\end{proof}

\begin{proof}[Proof of Proposition \ref{pro:tight}] Since $w_p(0)= 0$ for all $p \geq 2$. Therefore $\E{|w_p(0)|^ \gamma } = 0$ for all $\gamma$. Hence for any $M >0$,
	$$\E{|w_p(0)|^ \gamma } \leq M \ \  \forall \ n \in \mathbb{N}.$$
	This shows (\ref{eq:tight3}) is true.
	
We shall prove (\ref{eq:tight4}) of Proposition \ref{pro:tight} for $\alpha = 4$ and $\beta = 1$. First recall $w_p(t)$ from (\ref{eqn:w_p(t)}),
$$ w_p(t) = \frac{1}{\sqrt{n}} \bigl\{ \Tr(C_n(t))^p - \E[\Tr(C_n(t))^p]\bigr\}.$$
Suppose $p \geq 2$ is fixed and $t,s \in[0,T] $, for some $T \in \mathbb{N}$. Then
\begin{align*}
 w_p(t) - w_p(s) &= \frac{1}{\sqrt{n}} \big[ \Tr(C_n(t))^p - \Tr(C_n(s))^p - \E[\Tr(C_n(t))^p - \Tr(C_n(s))^p]\big] \\
  \E[w_p(t) - w_p(s)]^4 &= \frac{1}{n^2} \E \big[ \Tr(C_n(t))^p - \Tr(C_n(s))^p - \E[\Tr(C_n(t))^p - \Tr(C_n(s))^p]\big]^4.
\end{align*}
Since we know that for any $r \in \mathbb{N}$ 
\begin{equation} \label{eqn:nth power}
|x_1 + x_2 + \cdots + x_n|^r \leq 2^{r-1} (|x_1|^r + |x_2|^r + \cdots + |x_n|^r).  
\end{equation}
Therefore from (\ref{eqn:nth power})
\begin{align} \label{eqn:|R_1|+|R_2|}
\E[w_p(t) - w_p(s)]^4 & \leq \frac{2^3}{n^2} \E \big[ \Tr(C_n(t))^p - \Tr(C_n(s))^p\big]^4 \nonumber \\
   &  \ \ \ \ \ + \frac{2^3}{n^2} \E \big[ \E [\Tr(C_n(t))^p - \Tr(C_n(s))^p]\big]^4 \nonumber \\
& \leq \frac{2^3}{n^2} \E \big[ \Tr(C_n(t))^p - \Tr(C_n(s))^p\big]^4 \nonumber \\
   &   \ \ \ \ \ + \frac{2^3}{n^2} \big[ \E [\Tr(C_n(t))^p - \Tr(C_n(s))^p]\big]^4 \nonumber \\
& = R_1 + R_2, \  \mbox{say}. 
\end{align}
We shall prove (\ref{eq:tight4}) of Proposition \ref{pro:tight} in two Steps. In Step 1, we shall show
\begin{equation} \label{eqn:|R_1|}
|R_1| \leq M^T_7 (t-s)^2 \ \ \ \forall \ n \in \mathbb{N} \ \mbox{and } t,s \in[0,\ T],
\end{equation}
 where $M^T_7$ is a positive constant, which depends only on $p$ and $T$.
In Step 2, we shall show 
\begin{equation} \label{eqn:|R_2|}
|R_2| \leq M^T_{10} (t-s)^2 \ \ \ \forall \ n \in \mathbb{N} \ \mbox{and } t,s \in[0,\ T],
\end{equation}
 where $M^T_{10}$ is a positive constant, which depends only on $p$ and $T$. 
 \vskip8pt
\noindent \textbf{Step 1: Proof of (\ref{eqn:|R_1|}).}
\vskip8pt
 Since $0< s <t $, we have
\begin{align*}
C_n(t) &= C_n(t) - C_n(s) + C_n(s)
 = C'_n(t-s) + C_n(s)
\end{align*}
where $C'_n(t-s)$ is a circulant matrix with entries $\{ b_n(t) -b_n(s)\}_{n \geq 0}$. As $b_n(t)$ is the standard Brownian motion, $b_n(t) -b_n(s)$ has same distribution as $b_n(t-s)$ and hence 
\begin{equation} \label{eqn:E[C'_n]}
\E[\Tr (C_n(t-s))^p] = \E[\Tr (C'_n(t-s))^p].
\end{equation}
Now
 \begin{equation*}
 (C_n(t))^p  = [C'_n(t-s)+ C_n(s)]^p.
 \end{equation*} 
 Use binomial expansion on right hand side of the above equation, we get 
\begin{align} \label{eqn:C_n^p(t)- C_n^p(s)}
(C_n(t))^p - (C_n(s))^p & = (C'_n(t-s))^p + \sum_{d=1}^{p-1} \binom{p}{d} (C'_n(t-s))^d (C_n(s))^{p-d}. 
\end{align} 
Therefore
\begin{align}
[\Tr (C_n(t))^p- \Tr (C_n(s))^p]^4 & = \big[ \Tr[ (C'_n(t-s) )^p] + \sum_{d=1}^{p-1} \binom{p}{d} \Tr [(C'_n(t-s))^d (C_n(s))^{p-d}]  \big]^4 \nonumber \\
 & \leq  2^3 [\Tr (C'_n (t-s))^p]^4 \nonumber \\
  &  \ \ \ \ \ + 2^3 \big[  \sum_{d=1}^{p-1}\binom{p}{d} \Tr[ (C'_n(t-s))^d (C_n(s))^{p-d}] \big]^4. \nonumber 
 \end{align}
 Note that $\binom{p}{d} \leq 2^p$, for all $1 \leq d \leq p-1$. So, we have
 \begin{align}
 [\Tr (C_n(t))^p- \Tr (C_n(s))^p]^4 & \leq  2^3 [\Tr (C'_n (t-s))^p]^4 \nonumber \\
 &  \ \ \ \ \ + 2^3 \big[  \sum_{d=1}^{p-1} 2^p \Tr[ (C'_n(t-s))^d (C_n(s))^{p-d}] \big]^4 \nonumber \\
 & \leq  2^3 [\Tr (C'_n(t-s))^p]^4 \nonumber \\
  &  \ \ \ \ \ +  2^{4p+6} \sum_{d=1}^{p-1}  [\Tr (C'_n(t-s))^d (C_n(s))^{p-d}]^4. \nonumber  
\end{align}
Therefore from the above inequality, we get
\begin{align*}
R_1 & = \frac{2^3}{n^2} \E \big[ \Tr(C_n(t))^p - \Tr(C_n(s))^p\big]^4 \\
& \leq  \frac{2^6}{n^2} \E [\Tr (C'_n(t-s))^p]^4 +\frac{2^{4p+9}}{n^2} \E \sum_{d=1}^{p-1} [\Tr (C'_n(t-s))^d (C_n(s))^{p-d}]^4.  
\end{align*}
From (\ref{eqn:E[C'_n]}) we have, $\E[\Tr (C_n(t-s))^p] = \E[\Tr (C'_n(t-s))^p]$, therefore
\begin{align} \label{eqn:R_1}
 R_1 & \leq  \frac{2^6}{n^2} \E [\Tr (C_n(t-s))^p]^4 + \sum_{d=1}^{p-1} \frac{ 2^{4p+9} }{n^2} \E  [\Tr (C_n(t-s))^d (C_n(s))^{p-d}]^4 \nonumber \\
 & =  T_0 + \sum_{d=1}^{p-1} T_d, \ \mbox{say}.
\end{align}
We shall show that
\begin{align}
T_0 & \leq M_3(t-s)^{2p} \ \ \mbox{and} \label{eqn:T_0} \\
T_d & \leq M^{d}_6 (t-s)^{2d} \ \   \mbox{for} \ 1 \leq d \leq (p-1), \label{eqn:T_d}
\end{align}
where $M_3$ and $M^{d}_6$ are positive constants. Now we define subsets $D_{p_4}$ and $D^c_{p_4}$ of $ A_{p} \times A_{p} \times A_{p} \times A_{p}$, which will be used in the proof of (\ref{eqn:T_0}) and (\ref{eqn:T_d}).
\begin{definition} \label{def:D_{p_4}}
	For $p \geq 2$, define $D_{p_4}$ is a subset of  $ A_{p} \times A_{p} \times A_{p} \times A_{p}$ such that  $ (I_p, J_p, K_p, L_p) \in D_{p_4}$ if 
	\begin{enumerate} 
		\item[(i)] $I_p$ completely matches  with one of $J_p, K_p, L_p$ and the remaining two of $J_p, K_p, L_p$ completely matches with themselves,
		\item[(ii)] suppose $I_p$ completely matches  with  $J_p$ and $K_p$ completely matches  with  $L_p$, then all entries of $S_{I_p} \cup S_{K_p}$ are distinct. 
	\end{enumerate}
\end{definition}
\begin{definition} \label{def:D^c_{p_4}}
	For $p \geq 2$, define $D^c_{p_4}$ = $ A_{p} \times A_{p} \times A_{p} \times A_{p} \ \backslash \ D_{p_4}$. That is, if $ (I_p, J_p, K_p, L_p) \in D^c_{p_4}$, then there is at least one self-matching in one of  $I_p,\ J_p,\ K_p$ and $L_p $.
	
\end{definition}
\noindent \textbf{Proof of (\ref{eqn:T_0}):}
\vskip8pt
We first calculate the term $T_0$ of (\ref{eqn:R_1}).
\begin{align} \label{eqn:T_01}
T_0 = & \frac{2^6}{n^2} \E [\Tr (C_n(t-s))^p]^4 \nonumber \\
 = & \frac{2^6}{n^2} \E [\frac{n}{ n^\frac{p}{2}} \sum_{ A_p} b_{i_1}(t-s) b_{i_2} (t-s) \cdots b_{i_p} (t-s) ]^4. 
\end{align}

For $I_p = (i_1, i_2, \ldots, i_p) \in A_p$, we define $b_{I_p}(t-s) = b_{i_1}(t-s) b_{i_2} (t-s) \cdots b_{i_p} (t-s)$.
Similarly, for $J_p$, $K_p$ and $L_{p} \in A_p$ we define $  b_{J_p} (t-s), b_{K_p} (t-s)$ and $b_{L_p} (t-s)$, respectively. Now (\ref{eqn:T_01}) can be written as

\begin{align} \label{eqn:S_0+S^c_0}
T_0= & \frac{2^6}{n^{2p-2}} \E [ \sum_{A_p, A_p, A_p, A_p} b_{I_p}(t-s) b_{J_p} (t-s)b_{K_p} (t-s) b_{L_p} (t-s) ] 
 =  S_0 + S^c_0,
\end{align}
where
$$ S_0 = \frac{2^6}{n^{2p-2}} \E [ \sum_{D_{p_4}} b_{I_p}(t-s) b_{J_p} (t-s)b_{K_p} (t-s) b_{L_p} (t-s) ],$$
$$ S^c_0 = \frac{2^6}{n^{2p-2}} \E [ \sum_{D^c_{p_4}} b_{I_p}(t-s) b_{J_p} (t-s)b_{K_p} (t-s) b_{L_p} (t-s) ]$$
and $D_{p_4}$, $D^c_{p_4}$ are as defined in (\ref{def:D_{p_4}}), (\ref{def:D^c_{p_4}}), respectively. We shall see in $T_0$, the maximum contribution will come from $S_0$. More precisely $S_0$ will be $O(1)$ and $S^c_0$ will be $o(1)$, as $n \tends \infty$. 
%

Now, we calculate the first term $S_0$ of (\ref{eqn:S_0+S^c_0}), for $(I_p, J_p, K_p, L_p) \in {D_{p_4}}$. Suppose $I_p$ completely matches with $J_p$, say and $K_p$ completely matches with $L_p$, and each entries of $I_p$ and $K_p$ are distinct. Hence $S_0$ will be

$$ S_0 = \frac{2^6}{n^{2p-2}} 3 (p!)^2 \E [ \sum_{D_{p_2}} (b_{I_p}(t-s))^2 (b_{K_p} (t-s))^2 ],$$ 
where $D_{p_2}= \{ (I_p, K_p) \in A_p \times A_p : \mbox{all entries of } S_{I_p} \cup S_{K_p} \mbox{ are distinct} \}$. The factor $(p!)^2$ is coming because $I_p$ can match with the given vector $J_p$ in $p!$ ways and $K_p$ can match with the given vector $L_p$ in $p!$ ways. The factor $3$ is coming because there are three ways to partition a set with 4 elements into two subsets where each subset contains two elements. Since all entries of $D_{p_2}$ are distinct, therefore
\begin{align*}
 S_0 &= \frac{2^63}{n^{2p-2}} (p!)^2 \sum_{D_{p_2}} \E[(b_{I_p}(t-s))^2 (b_{K_p} (t-s))^2] \\
 & = \frac{2^63}{n^{2p-2}} (p!)^2 \sum_{D_{p_2}} (t-s)^{2p} \\
 & \leq \frac{2^63}{n^{2p-2}} (p!)^2 \sum_{A_p, A_p} (t-s)^{2p}.
\end{align*}
From (\ref{def:A_p}), observe that $A_p = \cup_{x=1}^{p-1} A_{p,x}$. Now from the last expression, we get
\begin{align*}
S_0 & \leq 2^63 (t-s)^{2p} (p!)^2 \sum_{x,y=0}^{p-1} \frac{|A_{p,x}|}{n^{p-1}}\frac{|A_{p,y}|}{n^{p-1}}.
\end{align*}
For each fixed $p \geq 2$, from Result \ref{ft:variance}, we get
$$ \lim_{n\tends \infty} \frac{|A_{p,x}|}{n^{p-1}} = f_p(x) \ \ \ \mbox{and} \ \ \lim_{n\tends \infty} \frac{|A_{p,y}|}{n^{p-1}} = f_p(y),$$
where $f_p(x)=\frac{1}{(p-1)!}\sum_{k=0}^{x}(-1)^k\binom{p}{k}(x-k)^{p-1}.$
Therefore there exists a positive constant $M_1$, which depends only on $p$, such that 
$$ 2^63 (p!)^2 | \sum_{x,y=0}^{p-1} \frac{|A_{p,x}|}{n^{p-1}}\frac{|A_{p,y}|}{n^{p-1}}| \leq M_1 \ \ \forall \ n \in \mathbb{N}.$$
Hence 
\begin{equation} \label{eqn:S_0}
|S_0| \leq M_1 (t-s)^{2p}  \ \ \forall \ n \in \mathbb{N}.
\end{equation}
Now, we calculate the second term $S^c_0$ of (\ref{eqn:S_0+S^c_0}).
First observe that $S^c_0= 0$ when an odd power of $b_i(t)$ appears in $S^c_0$ because $b_i(t)$ is the standard Brownian motion for all $i \geq 0$.
 From the calculation of $S_0$, it is clear that $S_0^c= o(1)$ as $(I_p, J_p, K_p, L_p) \in D^c_{p_4} $.

 Let us look at a special case for $(I_p, J_p, K_p, L_p) \in D^c_{p_4} $. Suppose there is a self-matching in $I_p$ and $J_p$, and $K_p$ matches completely with $L_p$. Now suppose $x$ many entries from $I_p$ matches exactly with $x$ many entries from $J_p$, where $x \leq p$. Then a typical term in $S^c_0$ will be of the following form
\begin{align*} 
\E[b^2_{i_1}(t-s) b^2_{i_2} (t-s) \cdots b^2_{i_x} (t-s) b_{i_{x+1}}(t-s) \cdots b_{i_p} (t-s) b_{j_{x+1}}(t-s) \cdots b_{j_p}  (t-s) \\ b^2_{k_1}(t-s) b^2_{k_2} (t-s) \cdots b^2_{k_p} (t-s) ].
\end{align*}
Note that the last expression will be non-zero if, each random variable appears at least twice. Also there are three constraints among the indices, namely; $i_1 + i_2 + \cdots+ i_p = 0 (\mbox{mod $n$}$), $j_1 + j_2 + \cdots+ j_p = 0 (\mbox{mod $n$}$) and $k_1 + k_2 + \cdots+ k_p = 0 (\mbox{mod $n$}$). Thus we have at most $[ x + (\frac{p-x}{2}-1) + (\frac{p-x}{2}-1) + (p-1) ] = 2p-3 $ free choice in $ D^c_{p_4} $. Similarly in the other cases of self-matching with $(I_p, J_p, K_p, L_p) \in D^c_{p_4} $, the number of free vertices will be bounded by $(2p-3)$. Hence the maximum contribution due to $ D^c_{p_4} $ will be $ O(n^{2p-3}).$ Also $2k$-th moment of $b_i(t)$ is $(t^k \frac{(2k)!}{2^k k!})$, therefore
\begin{align} \label{eqn:S^c_01}
S^c_0 & = \frac{2^6}{n^{2p-2}}  \E [  \sum_{D^c_{p_4}} b_{I_p}(t-s) b_{J_p} (t-s)b_{K_p} (t-s) b_{L_p} (t-s) ]  \nonumber \\
    & = 2^6 (t-s)^{2p} g(p) O\big( \frac{n^{2p-3}}{n^{2p-2}} \big)\nonumber \\
    & = 2^6 (t-s)^{2p} g(p) O (n^{-1} ) \nonumber \\
    & = 2^6 (t-s)^{2p} g(p) h(n),
\end{align}
where $g(p)$ is some function of $p$ and $h(n) = O(n^{-1})$, that is, $h(n) \tends 0$ as $ n \tends 0$. Therefore there exists a positive constant $M_2$, which depends only on $p$ such that 
$$2^6 | g(p) h(n)| \leq M_2  \ \ \forall \ n \in \mathbb{N}.$$
Hence 
\begin{equation} \label{eqn:S^c_0}
S^c_0 \leq M_2 (t-s)^{2p}  \ \ \forall \ n \in \mathbb{N}.
\end{equation}
Now, using (\ref{eqn:S_0}) and (\ref{eqn:S^c_0}) in (\ref{eqn:S_0+S^c_0}), we get 
\begin{align*} 
T_0 & \leq M_3 (t-s)^{2p}  \ \ \forall \ n \in \mathbb{N},
\end{align*}
where $M_3 = M_1 + M_2$.
\vskip5pt
\noindent \textbf{Proof of $ (\ref{eqn:T_d})$:}
\vskip8pt
We calculate the term $T_d$ of (\ref{eqn:R_1}) for $1 \leq d \leq (p-1).$
 First recall 
\begin{align*}
T_d & = \frac{2^{4p+9}}{n^2} \E  [\Tr (C_n(t-s))^d (C_n(s))^{p-d}]^4 \\
 & = \frac{2^{4p+9} }{n^2} \E [\frac{n}{ n^\frac{p}{2}} \sum_{ A_p} b_{i_1}(t-s) b_{i_2} (t-s) \cdots b_{i_d} (t-s) b_{i_{d+1}}(s) b_{i_{d+2}}(s) \cdots b_{i_p}(s) ]^4. \nonumber
\end{align*}
For $I_p= (i_1, i_2, \ldots, i_d, i_{d+1} ,\ldots , i_p) \in A_p$ and $1 \leq d \leq p-1$, we define $I_d=(i_1, i_2, \ldots, i_d)$ and $I_{p-d} = ( i_{d+1} , i_{d+2}, \ldots , i_p)$. Similarly, for $J_p, K_p$ and $L_p \in A_p$ we define $J_d, J_{p-d}; K_d, K_{p-d}$ and $L_d, L_{p-d}$, respectively. 

Also define
$$
b_{I_d}(t-s) = b_{i_1}(t-s) b_{i_2} (t-s) \cdots b_{i_d} (t-s), \ b_{I_{p-d}}(s)= b_{i_{d+1}}(s) b_{i_{d+2}} (s) \cdots b_{i_{p}} (s).
$$
Similarly, we define $  b_{J_d} (t-s), b_{J_{p-d}} (s); b_{K_d} (t-s), b_{K_{p-d}} (s)$ and  $b_{L_d} (t-s), b_{L_{p-d}} (s)$, respectively. Using these notation, $T_d$ can be written as


\begin{align*}
T_d & = \frac{2^{4p+9} }{n^{2p-2}} \E [ \sum_{A_p, A_p, A_p, A_p} b_{I_d}(t-s)b_{I_{p-d}}(s) b_{J_d} (t-s) b_{J_{p-d}} (s)\\
 & \ \ \ \ \ \ b_{K_d} (t-s)b_{K_{p-d}} (s) b_{L_d} (t-s)b_{L_{p-d}} (s) ]. \nonumber 
\end{align*}
Now by the independent increment property of the Brownian motion, we have
\begin{align}
T_d = & \frac{2^{4p+9} }{n^{2p-2}}  \sum_{A_p, A_p, A_p, A_p}  \E[b_{I_d}(t-s)  b_{J_d} (t-s) b_{K_d} (t-s) b_{L_d} (t-s) ] \nonumber\\
& \ \ \E[ b_{I_{p-d}}(s) b_{J_{p-d}} (s) b_{K_{p-d}} (s) b_{L_{p-d}} (s) ] \nonumber \\
 = & S_d + S^c_d, \label{eqn:S_d+S^c_d} 
\end{align}
where
\begin{align*}
S_d & =  \frac{2^{4p+9} }{n^{2p-2}}   \sum_{D_{p_4}}  \E[b_{I_d}(t-s)  b_{J_d} (t-s) b_{K_d} (t-s) b_{L_d} (t-s) ] \nonumber \\
& \ \ \ \ \ \E[ b_{I_{p-d}}(s) b_{J_{p-d}} (s) b_{K_{p-d}} (s) b_{L_{p-d}} (s) ],
\end{align*}

\begin{align*}
S^c_d & =  \frac{2^{4p+9} }{n^{2p-2}} \sum_{D^c_{p_4}}  \E[b_{I_d}(t-s)  b_{J_d} (t-s) b_{K_d} (t-s) b_{L_d} (t-s) ] \nonumber \\ 
& \ \ \ \ \ \E[ b_{I_{p-d}}(s) b_{J_{p-d}} (s) b_{K_{p-d}} (s) b_{L_{p-d}} (s) ]
\end{align*}
and $D_{p_4}$, $D^c_{p_4}$ are as defined in (\ref{def:D_{p_4}}), (\ref{def:D^c_{p_4}}), respectively.

Now, we calculate $S_d$. For maximum contribution, the total number of free variables in
$$ \E[b_{I_d}(t-s)  b_{J_d} (t-s) b_{K_d} (t-s) b_{L_d} (t-s) ] \E[ b_{I_{p-d}}(s) b_{J_{p-d}} (s) b_{K_{p-d}} (s) b_{L_{p-d}} (s) ],$$
is $[ d+d + (p-d) -1 + (p-d)-1]= 2p-2$, which is same as the order of $n$ in the denominator of $T_d$. Therefore by similar calculation as we have done for $S_0$, we get
\begin{equation*}
 S_d \leq 2^{4p+9} (t-s)^{2d}s^{2(p-d)} (3)^2 (d!(p-d)!)^2 \sum_{x,y=0}^{p-1} \frac{|A_{p,x}|}{n^{p-1}}\frac{|A_{p,y}|}{n^{p-1}}.
\end{equation*}
Since $s\in [0,\ T]$ and $ \lim_{n\tends \infty} \frac{|A_{p,x}|}{n^{p-1}} = f_p(x).$ Therefore there exists a positive constant $M^d_4$, which depends only  on $p$, $d$ and $T$, such that 
$$  2^{4p+9} 3^2 s^{2(p-d)} (d!(p-d)!)^2 \arrowvert  \sum_{x,y=0}^{p-1} \frac{|A_{p,x}|}{n^{p-1}}\frac{|A_{p,y}|}{n^{p-1}} \arrowvert  \leq M^d_4  \ \ \forall \ n \in \mathbb{N}.$$
Hence 
\begin{equation} \label{eqn:S_d}
S_d \leq M^d_4 (t-s)^{2d}  \ \ \forall \ n \in \mathbb{N}.
\end{equation}
Now following the similar arguments used in the proof of (\ref{eqn:S^c_0}) and (\ref{eqn:S_d}), we get 
\begin{equation*}
S^c_d \leq M^d_5 (t-s)^{2d}  \ \ \forall \ n \in \mathbb{N},
\end{equation*}
where $M^d_5$ is a positive constant, which depends only  on $p$, $d$ and $T$. Hence

\begin{align*} 
T_d &\leq  M^d_6 (t-s)^{2d} \ \ \forall \ n \in \mathbb{N},
\end{align*}
where $M^d_6 = M^d_4 + M^d_5$.

Use (\ref{eqn:T_0}) and (\ref{eqn:T_d}) in (\ref{eqn:R_1}), we get 
\begin{align*}
R_1 &\leq  M_3 (t-s)^{2p} + \sum_{d=1}^{p-1} M^d_6 (t-s)^{2d} \ \ \ \forall \ n \in \mathbb{N} \ \mbox{and } t,s \in[0,\ T].
\end{align*}
As $s,t \in [0, \ T]$, we get
\begin{equation*}
|R_1| \leq M^T_7 (t-s)^2 \ \ \ \forall \ n \in \mathbb{N} \ \mbox{and } t,s \in[0,\ T],
\end{equation*}
where $$M^T_7 = M_3 T^{2p-2} + \sum_{d=0}^{p-2} M^d_6 T^{2d}.$$
This complete the proof of Step 1.
\vskip8pt
\noindent \textbf{Step 2: Proof of (\ref{eqn:|R_2|}).}
\vskip8pt
We first recall the term $R_2$ from (\ref{eqn:|R_1|+|R_2|}),
$$R_2 = 2^3 \big[ \frac{1}{\sqrt{n}} \E [\Tr(C_n(t))^p - \Tr(C_n(s))^p]\big]^4.$$
Now by using (\ref{eqn:C_n^p(t)- C_n^p(s)}) in the above equation of $R_2$, we get
\begin{align*}
R_2 & = 2^3 \big[ \frac{1}{\sqrt{n}} \E[\Tr (C'_n(t-s) )^p] + \frac{1}{\sqrt{n}} \sum_{d=1}^{p-1} \binom{p}{d}\E[\Tr (C'_n(t-s))^d (C_n(s))^{p-d}]  \big]^4,
\end{align*}
where $C'_n(t-s)= C_n(t)- C_n(s)$. Now using (\ref{eqn:E[C'_n]}), $\E[C'_n(t-s)]= \E[C_n(t-s)]$ and $\binom{p}{d} \leq 2^p$ for all $1 \leq d \leq p-1$, we get
\begin{align} \label{eqn:R_2}
R_2 & \leq 2^3\big[ \frac{1}{\sqrt{n}} \E [\Tr (C_n(t-s) )^p] +  \sum_{d=1}^{p-1} \frac{2^p}{\sqrt{n}} \E [\Tr (C_n(t-s))^d (C_n(s))^{p-d}]  \big]^4 \nonumber \\
& = 2^3 \big[ W_0 + \sum_{d=1}^{p-1} W_d \big]^4, \ \mbox{say}. 
\end{align}
We first calculate the term $W_0$ of (\ref{eqn:R_2}).
\begin{align}\label{eqn:W_01}
W_0 & = \frac{1}{\sqrt{n}} \E [\Tr (C_n(t-s) )^p] \nonumber \\
& = \frac{1}{\sqrt{n}} \E [\frac{n}{ n^\frac{p}{2}} \sum_{ A_p} b_{i_1}(t-s) b_{i_2} (t-s) \cdots b_{i_p} (t-s) ] \nonumber \\
& = \frac{1}{ n^{\frac{p-1}{2} } } \sum_{ A_p} \E[ b_{i_1}(t-s) b_{i_2} (t-s) \cdots b_{i_p} (t-s) ].
\end{align}
For a non-zero contribution from $\E[ b_{i_1}(t-s) b_{i_2} (t-s) \cdots b_{i_p} (t-s) ]$, no random variables can appear odd number of times as $b_i(t)$ is the standard Brownian motion. Suppose there are $m$ many distinct entries in $(i_1, i_2, \ldots , i_p)$ with $m<p$ and $q^{th}$ distinct entry appears exactly $2k_q$ times, for $1 \leq q \leq m$. Then
\begin{equation} \label{eqn:p=}
2k_1 + 2k_2 + \cdots + 2k_m = p.
\end{equation} 
Since For a non-zero contribution from $\E[ b_{i_1}(t-s) b_{i_2} (t-s) \cdots b_{i_p} (t-s) ]$, each random variables has to appear at least twice, therefore $2k_q  \geq 2$ for all $1 \leq q \leq m$. Hence
\begin{align} \label{eqn:p=>}
p= k_1  + 2k_2 + \cdots + 2k_m  \geq 2m.
\end{align}
Therefore from (\ref{eqn:W_01}), we get
\begin{align}
W_0 & = \frac{1}{ n^{\frac{p-1}{2} } } \sum_{ A_p} \E[ (b_{i_1}(t-s))^{2k_1} (b_{i_2} (t-s))^{2k_2} \cdots (b_{i_m} (t-s))^{2k_m} ]. \nonumber
\end{align}
As $2k$-th moment of $b_i(t)$ is $(t^k \frac{(2k)!}{2^k k!})$, from the above equation we get
\begin{align} \label{eqn:W_02}
W_0 & = (t-s)^{(k_1 + k_2 + \cdots + k_m)} g_0(p) O \biggl( \frac{ n^{m-1} }{ n^{\frac{p-1}{2} } } \biggr) \nonumber \\
& = (t-s)^{\frac{p}{2}} g_0(p) O(n^{ \frac{2m-p -1}{2} }) \nonumber \\
& = (t-s)^{\frac{p}{2}} g_0(p) h_0(n),
\end{align}
where $g_0(p)$ is some function of $p$ and $h_0(n) = O(n^{ \frac{2m-p -1}{2} })$. Note that $\lim_{n\tends \infty} h_0(n) = 0$ as $\frac{2m-p -1}{2} < 0$ from (\ref{eqn:p=>}). Therefore there exists a positive constant $M_8$, which depends only on $p$, such that

$$ | g_0(p) h_0(n)| \leq M_8  \ \ \forall \ n \in \mathbb{N}.$$
Hence
\begin{equation} \label{eqn:W_0}
W_0  \leq M_8 (t-s)^{\frac{p}{2}}  \ \ \forall \ n \in \mathbb{N}. 
\end{equation}
Now we calculate the term $W_d$ of (\ref{eqn:R_2}), for $1 \leq d \leq p-1.$ First recall
\begin{align*}
W_d & = \frac{2^p}{\sqrt{n}} \E [\Tr (C_n(t-s))^d (C_n(s))^{p-d}] \\
& = \frac{2^p}{\sqrt{n}} \E [\frac{n}{ n^\frac{p}{2}} \sum_{ A_p} b_{i_1}(t-s) b_{i_2} (t-s) \cdots b_{i_d} (t-s)  b_{i_{d+1}}(s) b_{i_{d+2}} (s) \cdots b_{i_p} (s) ] \nonumber \\
& = \frac{2^p}{ n^{\frac{p-1}{2} } } \sum_{ A_p} \E[  b_{i_1}(t-s) b_{i_2} (t-s) \cdots b_{i_d} (t-s)  b_{i_{d+1}}(s) b_{i_{d+2}} (s) \cdots b_{i_p} (s) ].
\end{align*}
Since $b_i(t-s)$ and $b_i(s)$ are independent for any $i \geq 0$. We have
\begin{align*}
W_d = \frac{2^p}{ n^{\frac{p-1}{2} } } \sum_{ A_p} \E[  b_{i_1}(t-s) b_{i_2} (t-s) \cdots b_{i_d} (t-s)] \E[b_{i_{d+1}}(s) b_{i_{d+2}} (s) \cdots b_{i_p}(s) ].
\end{align*}
Now by the similar calculation, as we have done for $W_0$, we get
\begin{align*}
W_d  = 2^p (t-s)^{\frac{d}{2}} s^{\frac{p-d}{2}} g_d(p) h_d(n),
\end{align*}
where $g_d(p)$ is some function of $p$ and $|h_d(n)| \leq |h_0(n)|$, $h_0(n)$ as in (\ref{eqn:W_02}). Therefore $h_d(n) \tends 0$ as $n \tends \infty $. Since $|s| \leq T$, therefore there exists a positive constant $M^d_9$, which depends only on $p$, $d$ and $T$, such that 
\begin{equation*}
|2^p s^{\frac{p-d}{2}} g_d(p) h_d(n)| \leq M^d_9 \ \ \forall \ n \in \mathbb{N}.
\end{equation*}
Hence
\begin{equation} \label{eqn:W_d}
W_d  \leq M^d_9 (t-s)^{\frac{d}{2}}  \ \ \forall \ n \in \mathbb{N}. 
\end{equation}
Now, using (\ref{eqn:W_0}) and (\ref{eqn:W_d}) in (\ref{eqn:R_2}), we get 
\begin{align}
R_2 & \leq 2^3 \big[ M_8 (t-s)^{\frac{p}{2}} + \sum_{d=1}^{p-1} M^d_9 (t-s)^{\frac{d}{2}} \big]^4 \nonumber \\
 & = 2^3 (t-s)^2 \big[ M_8 (t-s)^{\frac{p-1}{2}} + \sum_{d=0}^{p-2} M^d_9 (t-s)^{\frac{d}{2}} \big]^4  \ \ \ \forall \ n \in \mathbb{N}. \nonumber 
\end{align}
As $s,t \in [0, \ T]$, we get 
\begin{equation*}
|R_2| \leq M^T_{10} (t-s)^2 \ \ \ \forall \ n \in \mathbb{N} \ \mbox{and } t,s \in[0,\ T],
\end{equation*}
where 
$$M^T_{10} =  2^3 \big[M_8 T^{\frac{p-1}{2}} + \sum_{d=0}^{p-2} M^d_9 T^{\frac{d}{2}} \big]^4.$$

This complete the proof of Step 2.
\vskip5pt
Now from (\ref{eqn:|R_1|+|R_2|}), (\ref{eqn:|R_1|}) and (\ref{eqn:|R_2|}), we get 
\begin{align} \label{eqn:M_T}
\E[w_p(t) - w_p(s)]^4 & \leq M_T (t-s)^2 \ \ \ \forall \ n \in \mathbb{N} \ \mbox{and } t,s \in[0,\ T],
\end{align}
where $M_T = M^T_7 + M^T_{10}$, which depends only on $T$ and $p$. This completes the proof of Proposition \ref{pro:tight} for $\alpha = 4$ and $\beta = 1.$
\end{proof}
\begin{remark}
	Since the constant $M_T$ of (\ref{eqn:M_T}) depends on $p$, it may tends to infinity as $p \tends \infty$. Therefore, Theorem \ref{thm:process} may not be true for all $p \in \mathbb{N}$. But from the proof of Theorem \ref{thm:process}, we can conclude that as $n \tends \infty$
	\begin{equation*}
	\{ w_p(t) ; t \geq 0, 2 \leq p \leq N\} \stackrel{\mathcal D}{\rightarrow} \{N_p(t) ; t \geq 0 , 2 \leq p \leq N\}, 
	\end{equation*}  
	for some fixed $N \in \mathbb{N}$.
\end{remark}

\noindent{\bf Acknowledgement:} We would like to thank Prof. Arup Bose and Prof. K. Suresh Kumar for their comments.


\providecommand{\bysame}{\leavevmode\hbox to3em{\hrulefill}\thinspace}
\providecommand{\MR}{\relax\ifhmode\unskip\space\fi MR }
\providecommand{\MRhref}[2]{%
  \href{http://www.ams.org/mathscinet-getitem?mr=#1}{#2}
}
\providecommand{\href}[2]{#2}

\end{document}